\numberwithin{equation}{section}
\newtheorem{thm}{Theorem}
\newtheorem{lem}[thm]{Lemma}
\newtheorem{prop}[thm]{Proposition}
\newtheorem{rem}[thm]{Remark}
\newcommand{\noi}{\noindent}
\newcommand{\E}{\mathbb{E}}
\newcommand{\R}{\mathbb{R}}
\newcommand{\N}{\mathbb{N}}
\newcommand{\ph}{\varphi}
\newcommand{\Q}{{\mathbb Q}}
\newcommand{\PP}{{\mathbb P}}
\newcommand{\calA}{{\mathcal A}}
\newcommand{\calC}{{\mathcal C}}
\newcommand{\calF}{{\mathcal F}}
\newcommand{\calL}{{\mathcal L}}
\newcommand{\calQ}{{\mathcal Q}}
\newcommand{\calV}{{\mathcal V}}
\newcommand{\To}{\Rightarrow}
\newcommand\iy{\infty}
\newcommand{\pen}{ g}
\newcommand{\one}{\mathbbm{1}}
\newcommand{\sredm}[1]{\ifmmode\text{\xout{\ensuremath{\displaystyle \textcolor{red}{#1}}}}\else\sout{\textcolor{red}{#1}}\fi}
\title{A Scaling Limit for Utility Indifference Prices in the Discretized Bachelier Model}
\author[A. Cohen]{Asaf Cohen }
\address{Department of Mathematics\\
University of Michigan\\
Ann Arbor, MI 48109\\
United States
}
\email{shloshim@gmail.com }
\author[Y. Dolinsky]{Yan Dolinsky }
\address{Department of Statistics\\
Hebrew University\\
Jerusalem\\
Israel}
\email{yan.dolinsky@mail.huji.ac.il}
\thanks{This is the last version of the paper. To appear in {\it Finance and Stochastics}.}
\thanks{A. Cohen acknowledges the financial support of the National Science Foundation (DMS-2006305).
Y. Dolinsky is supported in part by the GIF Grant 1489-304.6/2019 and the ISF grant 160/17}
\date{\today}
\begin{document}

\maketitle

\begin{abstract}
We consider the discretized Bachelier model
where hedging is done on an equidistant set of times.
Exponential utility indifference prices are studied for path-dependent European options and we compute their non-trivial scaling limit for a
large number of trading times $n$ and when
 risk aversion is scaled like $n\ell$ for some constant $\ell>0$.
 Our analysis is purely probabilistic. We first use a duality argument to transform the problem into an optimal drift control problem with a penalty term. We further use
martingale techniques and strong invariance principles and get that the limiting problem takes the form of a volatility control problem.

${}$\\
\noi{\bf AMS Classification:} Primary: 91G10, 60F15

\noi{\bf Keywords:} Utility indifference, strong approximations, path-dependent SDEs, asymptotic analysis
\end{abstract}
\section{Introduction}

\vspace{3pt}
Taking into account market frictions is an important challenge in financial modeling.
In this paper, we focus on the friction that the
rebalancing of the portfolio strategy is limited to occur discretely.
In such a realistic
situation, a general future payoff cannot be hedged perfectly even in complete market models
such as the Bachelier model or the Black--Scholes model.

\vspace{3pt}
We consider hedging of a path-dependent European contingent claim in the Bachelier model
for the setup where the investor can hedge on
an equidistant set of times.
Our main result is providing the asymptotic behavior of the exponential utility indifference prices
where the risk aversion goes to infinity linearly
in the number of trading times.
Namely, we establish a non trivial scaling limit for
indifference prices where the friction goes to zero and the risk aversion goes to infinity.

\vspace{3pt}
This type of scaling limits goes back to the
seminal work of Barles and Soner \cite{BS:98}, which determines
the scaling limit of
utility indifference prices of vanilla
options for small proportional
transaction costs and high risk aversion.
Another work in this direction is the recent article \cite{BD:20} which deals with
scaling limits of
utility indifference prices of vanilla options for hedging with vanishing delay $H\downarrow 0$ when
risk aversion is scaled like $A/H$ for some constant $A$. In general, the common ground between the above two works and the present paper
it that all of them
start with complete markets and consider small frictions, which make the markets incomplete and the derivative securities cannot be perfectly hedged with a reasonable initial capital. Then, instead of considering perfect hedging,
these papers study utility indifference prices with exponential utilities and with large risk aversion. In contrast to the previous two papers which treated only vanilla options (path-independent), in this paper we are able to provide the limit theorem for path-dependent options.

\vspace{3pt}
Although the topic of discrete time hedging in the Brownian setting was largely studied,
the corresponding papers
studied the
 optimal discretization of given hedging strategies or stochastic integrals.
Indeed, the papers \cite{BKL:00,GT:01,HM:05} studied the convergence rate of a discrete time delta hedging strategies
for the case where the trading is done
on equidistant set of times.
In
\cite{G:02} the author proposed to discretize delta hedging strategies with
non-equidistant deterministic time nets, and showed that this generalizations leads (for some payoffs)
to a better error estimates. For further research in this direction see \cite{G:05,GT:09,GM:12}.
The papers \cite{F:11,F:14,CFRT:16}
study approximation of stochastic integrals with a discretization procedure that goes beyond deterministic nets and is performed on a set of random (stopping) times.

In the present study, instead of tracking a given hedging strategy
we follow the well known approach of utility indifference pricing which is commonly used in
the setup of incomplete markets (see \cite{R:08} and the references therein).
In other words, this approach says that the
price of a given contingent claim should be equal to the minimal amount of money that an investor has to
be offered so that she becomes indifferent (in terms of utility) between the situation where she has sold the
claim and the one where she has not.

\vspace{3pt}
We now put our contribution in the context of asymptotic analysis of risk-sensitive control problems. Such problems model situations where a decision maker aims to minimize small probability events with significant impacts.
Typically, the small probability event emerges from a state process with a volatility that vanishes with the scaling parameter. The limiting behaviors of such risk-sensitive control problems are governed by deterministic differential games, see e.g., \cite{Fleming2006b} and the references therein. In our case, the volatility of the state process does not vanish, but is rather of order $O(1)$, and the small probability event emerges from the discrete approximation of the stochastic integral on the grid. As a result, the structure of the limiting problem is quite different: rather than a deterministic (drift control) differential game, we obtain a stochastic (volatility) control problem. The connection between the indifference price and the difference between two values of two (non asymptotically) risk-sensitive problems is well-known, see e.g., \cite{hernandez2008relation}. It stems from the fact that the utility indifference pricing is a normalized version of the certainty equivalent criterion. Our study is concerned with the scaling limit of the utility indifference prices when the market friction goes to zero and the risk-aversion goes to infinity.

\vspace{3pt}
Let us outline the key steps in establishing the asymptotic result.
Our approach is purely probabilistic and allows to consider European contingent claims with path-dependent payoffs.
The first step in establishing the main result goes through a dual representation of the value function (Proposition \ref{prop.dual}). This representation is closer in nature to the form of the
limiting stochastic volatility control problem. In the dual problem, there is only one player: a maximizing (adverse) player that controls the drift;  the investor's role is translated into a martingale condition. The control's cost is small, hence allowing the maximizer to choose controls with high values. The second and the main step is to analyze the limit behavior of the dual representation. This is the main technical challenge of the paper and is given in Theorem \ref{thm.2}. The proof is done via upper and lower bounds.

\vspace{3pt}
There are two main challenges in the proof of the upper bound. The first one is comparing between the dual and the limiting penalty terms. The second one is due to the fact that the consistent price systems in the dual representation are not necessary tight. To handle the first challenge, we work on the discrete-time grid,
and for any level of penalty in the limiting problem we are able to construct an optimal penalty term in the prelimit problem (Lemma \ref{lem:main1}). The structure of this best penalty term also serves us in the proof of the lower bound.
We overcome the second difficulty by applying a strong invariance principle (Lemma \ref{lem:main2}) and not the widely used weak convergence approach which is not helpful here. Specifically, for any control in the prelimit dual problem we construct a process in the form of the limiting problem, which is close in probability to the original process and for which the penalization term is bounded in the desired direction.

\vspace{3pt}
The proof of the lower bound is made by an explicit construction driven by Lemma \ref{lem:main1}.
A key ingredient in the proof is a construction of a path-dependent stochastic differential equation (SDE),
which translates the consistent price systems given via the drift control into the limiting volatility control problem.

\vspace{3pt}
The rest of the paper is organized as follows. In Section \ref{sec:2} we introduce the model and formulate the main result (Theorem \ref{thm.1}). In Section \ref{sec:3} we provide the duality representation (Proposition \ref{prop.dual}), we formulate the main technical statement (Theorem \ref{thm.2}), and use these two results to deduce Theorem 1. The proof of Theorem \ref{thm.2} is given in Section \ref{sec:4}.

%
%

\section{The model and the main results}\label{sec:2}
Let
$(\Omega, \mathcal{F}, \mathbb P)$ be  a complete probability space carrying a
one-dimensional Wiener process
$(W_t)_{t \in [0,T]}$ with natural augmented filtration $(\mathcal{F}^W_t)_{t \in [0,T]}$ and time horizon $T \in (0,\infty)$.
We consider a simple financial market
with a riskless savings account bearing zero interest (for simplicity) and with a risky
asset $X$ with Bachelier price dynamics
\begin{equation}\label{2.bac}
X_t=X_0+\sigma W_t+\mu t, \qquad t \in [0,T],
\end{equation}
where $X_0>0$ is the initial asset price, $\sigma>0$ is the constant volatility and $\mu\in\mathbb{R}$ is the constant drift. These parameters are fixed throughout the paper.

Fix $n\in\mathbb N$ and consider an investor who can trade the risky asset
only at times from the grid $\left\{0,T/n,2T/n,...,T\right\}$.
For technical reasons,
in addition to the risky asset $(X_t)_{t \in [0,T]}$
we assume that the financial market contains short time horizon options with payoffs
in the spirit of power options. Formally,
for any $k=0,1,...,n$,
at time $kT/n$ the investor can buy but not sell European options that can be exercised at the time $(k+1)T/n$ with the payoff
$|X_{(k+1)T/n}-X_{kT/n}|^3$.
Denote by $h(n)$ the price of the above option. For simplicity, we will assume that the price does not depend on $k$. Moreover, we
will assume the following scaling:
\begin{equation}\label{2.scale1}
\lim_{n\rightarrow\infty} n^{3/2} h(n)=\infty
\end{equation}
and
\begin{equation}\label{2.scale2}
\lim_{n\rightarrow\infty} n h(n)=0.
\end{equation}
This investment
opportunity can be viewed
as an insurance against high values of the stock fluctuations.
Roughly speaking, the term $|X_{(k+1)T/n}-X_{kT/n}|^3$ is of order $O(n^{-3/2})$.
However since the payoff of this option is quite extreme we expect that the corresponding price will be more expensive
than $O(n^{-3/2})$. The scaling which is given by
(\ref{2.scale1})--(\ref{2.scale2}) says that the option price is more expansive than $O(n^{-3/2})$ but cheaper than $O(1/n)$.
\begin{rem}
It is possible to replace the payoff
$|X_{(k+1)T/n}-X_{kT/n}|^{3}$ with the payoff
$|X_{(k+1)T/n}-X_{kT/n}|^{2+\epsilon}$ for $\epsilon>0$
and assume that the new option price $\hat h(n)$ satisfies
$$
\lim_{n\rightarrow\infty} n \hat h(n)=0 \qquad \mbox{and} \qquad \lim_{n\rightarrow\infty} n^{1+\epsilon/2} \hat h(n)=\infty.
$$
However, for simplicity we work with power$=3$.
\end{rem}
In line with the above, the set $\calA^n$ of trading strategies for the $n$-step model consists of pairs
$(\gamma,\delta)=\{(\gamma_k,\delta_k)\}_{0\leq k\leq n-1}$
such that for any $k$, $\gamma_k,\delta_k$ are $\mathcal F^W_{kT/n}$-measurable
and in addition $\delta_k\geq 0$ (there is no short selling in the power options).
The corresponding portfolio value at the grid times is given by
\begin{align}
\notag
V^{\gamma,\delta}_{\frac{kT}{n}}&:=\sum_{i=0}^{k-1}\gamma_i \left(X_{(i+1)T/n}-X_{iT/n}\right)\\
\label{portfolio}
&\quad+\sum_{i=0}^{k-1}\delta_i \left(|X_{(i+1)T/n}-X_{iT/n}|^3-h(n)\right), \ \ k=0,1,...,n.
\end{align}

Next, let $\calC[0,T]$ be the space of all continuous functions $z:[0,T]\rightarrow\mathbb R$ equipped with the uniform norm
$||z||:=\sup_{0\leq t\leq T}|z_t|$ and let the function $f:\calC[0,T]\rightarrow [0,\infty)$ be Lipschitz continuous with respect to the uniform norm.
For the $n$-step model we consider a European option with payoff of the form
$f^n(X):=f(p^n(X))$
where $p^n(z)$ returns the linear interpolation of $((kT/n,z_{kT/n}):k=0,\ldots,n)$ for any function $z:[0,T]\to\R$.
It is immediate from the Lipschitz continuity of $f$ that it has a linear growth, and consequently that
\begin{equation}\label{2.1}
\E_{\mathbb P}[\exp\left({\alpha f^n(X)}\right)]<\infty, \qquad  \forall\alpha\in\mathbb R.
\end{equation}
The investor will assess the quality of a hedge by the resulting expected utility. Assuming exponential utility with constant absolute risk aversion $\lambda>0$, the {\it utility indifference price} and the {\it certainty equivalent price} of
the claim $f^n(X)$ do not depend on the investor's initial wealth
and, respectively, take the well-known forms
\begin{equation*}
\pi(n,\lambda):=
\frac{1}{\lambda}\log\left(\frac{\inf_{(\gamma,\delta)\in\calA^n}\E_{\mathbb P}\left[\exp\left(-\lambda\left(V^{\gamma,\delta}_T-f^n(X)\right)\right)\right]}
{\inf_{(\gamma,\delta)\in\calA^n}\E_{\mathbb P}\left[\exp\left(-\lambda V^{\gamma,\delta}_T\right)\right]}\right)
\end{equation*}
and
\begin{equation}\label{eq_c}
c(n,\lambda):=\frac{1}{\lambda}
\log\left(\inf_{(\gamma,\delta)\in\calA^n}\E_{\mathbb P}\left[\exp\left(-\lambda\left(V^{\gamma,\delta}_T-f^n(X)\right)\right)\right]\right).
\end{equation}
Note that
\begin{align}\label{eq_pi_c}
\pi(n,\lambda)=c(n,\lambda)-\frac{1}{\lambda}
\log\left(\inf_{(\gamma,\delta)\in\calA^n}\E_{\mathbb P}\left[\exp\left(-\lambda V^{\gamma,\delta}_T\right)\right]\right).
\end{align}

The following scaling limits are the main results of the paper. The proof is given at the end of Section \ref{sec:3}.
\begin{thm}\label{thm.1}
For $n\rightarrow\infty$ and re-scaled high risk-aversion $n\ell$ with $\ell>0$ fixed, the certainty equivalent price and the utility indifference price of $f^n(X)$ have the following scaling limit
\begin{align}\notag
 \lim_{n\rightarrow\infty} c(n,n\ell)&=\lim_{n\rightarrow\infty} \pi(n,n\ell)\\\label{2.3}
 &=\pi(\ell):=
 \sup_{\nu\in\calV}\E_{\mathbb P}\Big[f\left(X^{(\nu)}\right)- \frac{1}{2\ell{T}}\int_{0}^T \pen\left(\frac{\nu_t}{\sigma^2}\right)dt \Big],
\end{align}
where
\begin{align*}
X^{(\nu)}_t&:=X_0+\int_{0}^t\sqrt{\nu}_udW_u, \ \ t\in [0,T],\\
\pen(y)&:=y-\log y-1, \qquad  y>0.
\end{align*}
and
$\calV$ is the class of all bounded, non negative
$\mathcal(\mathcal F^W_t)$-predictable processes $\nu$.
\end{thm}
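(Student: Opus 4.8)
By \eqref{eq_pi_c} one has $\pi(n,\la)=c(n,\la)-c_0(n,\la)$, where $c_0(n,\la)$ denotes the right-hand side of \eqref{eq_c} with $f^n$ replaced by $0$. Hence it suffices to prove $\lim_{n\to\infty}c(n,n\ell)=\pi(\ell)$ for every Lipschitz $f\ge0$: applying this with $f\equiv0$ gives $c_0(n,n\ell)\to\sup_{\nu\in\calV}\big\{-\tfrac1{2\ell T}\int_0^T\pen(\nu_t/\sig^2)\,dt\big\}=0$, since $\pen\ge0$ with $\pen(1)=0$ (attained at $\nu\equiv\sig^2$); subtracting then yields the claim for $\pi(n,n\ell)$.

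\textbf{Step 1: duality.} The inner minimization defining $c(n,\la)$ is an exponential-utility maximization with random endowment $-f^n(X)$ over the convex cone $\calA^n$ (the cone structure coming from the no-short-selling constraint $\del\ge0$ on the power options). Applying the Gibbs variational formula $\log\E_{\p}[e^G]=\sup_{Q\ll\p}(\E_Q[G]-H(Q\|\p))$ with $G=-\la(V^{\gam,\del}_T-f^n(X))$, together with a minimax exchange that is legitimate because for fixed $n$ the strategy set is finite-dimensional and convex, I would establish a dual representation
\[
c(n,\la)=\sup_{Q\in\calQ_n}\Big(\E_Q[f^n(X)]-\tfrac1\la H(Q\|\p)\Big),
\]
where $\calQ_n$ is the set of $Q\ll\p$ for which $X$ is a martingale along the grid and $\E_Q[\,|X_{(k+1)T/n}-X_{kT/n}|^3\mid\calF^W_{kT/n}]\le h(n)$ for all $k$. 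Expressing $Q$ through a predictable drift control (Girsanov on the continuous filtration), so that the investor's role reduces to the grid-martingale condition, and decomposing $H(Q\|\p)$ over the $n$ grid blocks via the chain rule for relative entropy, recasts $c(n,\la)$ as a drift-control problem with a small penalty; this is the content of Proposition~\ref{prop.dual}.

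\textbf{Step 2: scaling limit of the dual.} With $\la=n\ell$ the penalty is $1/n$ times the entropy, so the maximizer can afford large controls; the guiding heuristic is that near-optimal $Q$'s make the grid increments of $X$ asymptotically conditionally Gaussian with a predictable instantaneous variance $\nu_t$, whence a functional invariance principle gives $\E_Q[f^n(X)]\to\E_{\p}[f(X^{(\nu)})]$, while each block's entropy $H\big(\mathcal N(0,\nu\tfrac Tn)\,\|\,\mathcal N(0,\sig^2\tfrac Tn)\big)=\tfrac12\pen(\nu/\sig^2)$ sums (plus an $O(1/n)$ cost for killing the drift $\mu$) to $\tfrac1{n\ell}H(Q\|\p)\to\tfrac1{2\ell T}\int_0^T\pen(\nu_t/\sig^2)\,dt$. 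I would then prove matching bounds (Theorem~\ref{thm.2}). For the \emph{lower bound}, given bounded predictable $\nu$ I would construct $Q$ explicitly as the law of a path-dependent SDE whose grid increments have conditional variance $\approx\nu_{kT/n}T/n$ and are nearly Gaussian — so the third-moment constraint holds, since Gaussian increments have third moment $O(n^{-3/2})=o(h(n))$ by \eqref{2.scale1} — and check it attains $\E_{\p}[f(X^{(\nu)})]-\tfrac1{2\ell T}\int_0^T\pen(\nu_t/\sig^2)\,dt-o(1)$. For the \emph{upper bound}, I would start from an arbitrary $Q\in\calQ_n$; since $\calQ_n$ need not be tight, I would replace weak convergence by a strong approximation (Lemma~\ref{lem:main2}) coupling the $Q$-dynamics to a process of the form $X^{(\nu)}$ uniformly close in probability, so that the Lipschitz property of $f$ transfers $\E_Q[f^n(X)]$ to $\E_{\p}[f(X^{(\nu)})]+o(1)$, while bounding each block's entropy from below using that among laws with a prescribed variance the Gaussian minimizes relative entropy to a Gaussian (Lemma~\ref{lem:main1}) — the constraint $\E_Q|X_{(k+1)T/n}-X_{kT/n}|^3\le h(n)=o(1/n)$ from \eqref{2.scale2} being exactly what forbids cheap heavy-tailed perturbations that would otherwise raise $\E_Q[f^n]$ without paying enough entropy. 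Combining the two bounds with Step~1 and the Reduction gives Theorem~\ref{thm.1}.

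\textbf{Main obstacle.} The crux is the upper bound in Step~2: matching the discrete entropy with $\int_0^T\pen(\nu_t/\sig^2)\,dt$ while simultaneously passing to the limit in $\E_Q[f^n(X)]$ for a merely path-dependent Lipschitz $f$, in the absence of tightness of $\calQ_n$. This is what forces a strong invariance principle in place of the usual weak-convergence argument, and it is precisely here that the scaling \eqref{2.scale1}--\eqref{2.scale2} of the power-option price is used.
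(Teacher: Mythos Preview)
Your proposal is correct and follows essentially the paper's route: duality (Proposition~\ref{prop.dual}) reduces $c(n,n\ell)$ to a drift-control problem, and Theorem~\ref{thm.2} (upper bound via strong invariance, lower bound via an explicit SDE construction, with Lemma~\ref{lem:main1} supplying the $\pen$ comparison) gives the scaling limit.

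The one genuine difference is in your Reduction. The paper does \emph{not} invoke Theorem~\ref{thm.2} with $f\equiv 0$ to handle the normalizing term $c_0(n,n\ell)$; instead it shows $\tfrac{1}{n\ell}\log\inf_{(\gam,\del)}\E_{\p}[e^{-n\ell V_T^{\gam,\del}}]\to 0$ directly: the upper bound is trivial from $(\gam,\del)\equiv(0,0)$, and the lower bound comes from Jensen's inequality under the martingale measure $\hat{\mathbb Q}$ together with Cauchy--Schwarz, giving a uniform lower bound on $\E_{\p}[e^{-n\ell V_T^{\gam,\del}}]$. Your reduction---apply the whole machinery once for general $f$ and once for $f\equiv 0$---is cleaner and perfectly valid, since $f\equiv 0$ is Lipschitz, nonnegative, and $\sup_{\nu}\{-\tfrac{1}{2\ell T}\int_0^T\pen(\nu_t/\sig^2)\,dt\}=0$. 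The paper's direct argument is shorter but special-purpose. A minor caveat in your Step~1 sketch: the admissible set $\calA^n$ is not finite-dimensional (the $\gam_k,\del_k$ are random variables), so the minimax interchange is not as automatic as you suggest; the paper instead proves Proposition~\ref{prop.dual} via Legendre--Fenchel and the Fritelli perturbation/separation argument, which you correctly cite as the end result.
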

Let us finish this section with the following three remarks.
\begin{rem}
The scaling (\ref{2.scale2}) is a technical one and needed for the proof of the upper bound. More precisely,
(\ref{2.scale2}) is used in Lemma \ref{lem:main2}
where we implement a strong invariance principle from
\cite{dol-2012}. The question whether this scaling can be removed and the result will still hold true is an interesting question. We expect that the proof of such a result would require additional machinery from dynamical programming and non-linear partial differential equations.  We leave this challenging question for future research.

The scaling (\ref{2.scale1}) is used in the proof of the lower bound. Let us notice that if we allow power options (with power $3$)
and (\ref{2.scale1}) does not hold, i.e.
$\lim_{n\rightarrow\infty} n^{3/2} h(n)<\infty$ then there will be an additional constraint on the process $\nu$, which appears on the right-hand side of (\ref{2.3}). As a result, in general, the scaling limit of the utility
indifference prices will be less than or equal to the one given by
(\ref{2.3}).
Since, we added power options from technical reasons we assume the scaling (\ref{2.scale2}), which says that these options
are not too cheap.
\end{rem}

\begin{rem}
Observe that if we take $\ell$ to infinity then we have
$$\lim_{\ell\rightarrow\infty}\pi(\ell)=\sup_{\nu\in\calV}\E_{\mathbb P}\left[f\left(X^{(\nu)}\right)\right].$$
The above right-hand side can be viewed as the model-free option price, see \cite{GHLT:14}. This corresponds to the case where
the investor wants to super-replicate the payoff $f(X)$
without any assumptions on the volatility. For the case where $\ell\rightarrow 0$
it is straightforward to check that
$$\lim_{\ell\rightarrow 0} \pi(\ell)= \E_{\mathbb P}\left[f\left(X^{(\sigma)}\right)\right],$$
where
$$X^{(\sigma)}_t=X_0+\sigma W_t, \ \ t\in [0,T].$$
Namely, we converge to the unique price of the continuous time complete market given by (\ref{2.bac}).
\end{rem}
\begin{rem}
A natural question is whether Theorem \ref{thm.1} can be extended to the case where the risky asset $(X_t)_{t\in [0,T]}$ is given by a
geometric Brownian motion, in other words, whether our scaling limit is valid for the Black--Scholes model.

The immediate conjecture is that for the Black--Scholes given by
$$\frac{d\mathrm{X}_t}{\mathrm{X}_t}=\sigma dW_t+\mu dt,$$
the scaling limit of the utility indifference prices takes the from
$$\sup_{\nu\in\calV}\E_{\mathbb P}\Big[f\left(\mathrm{X}^{(\nu)}\right)-
 \frac{1}{2\ell{T}}\int_{0}^T \pen\left(\frac{\nu_t}{\sigma^2}\right)dt \Big],
$$
where
$$\mathrm{X}^{(\nu)}_t=X_0\exp\left(\int_{0}^t\nu_s dW_s-\frac{1}{2}\int_{0}^t\nu^2_s ds\right), \ \ t\in [0,T].$$
The proof of this conjecture is far from obvious. First, in our duality result
Proposition \ref{prop.dual}
we need to assume (see \eqref{exp})
that the exponential moments of $X$ are exists.
This is no longer the case for the geometric Brownian motion and so even the duality
requires additional ideas.
The second difficulty is to formulate and prove the
``correct" analog of Lemmas \ref{lem:main1}--\ref{lem:main2}.
At this stage we leave the Black--Scholes setup for future research.
\end{rem}

\section{A dual representation and a scaling limit}\label{sec:3}
 Set $n\in\mathbb N$. Denote by $\calQ^n$ the set of all probability measures $\mathbb Q\sim\mathbb P$ with finite entropy
$$\E_{\mathbb Q}\left[\log\frac{d\mathbb Q}{d\mathbb P}\right]<\infty$$
relative to $\mathbb P$ and such that
the processes
$$(X_{kT/n})_{0\leq k\leq n} \ \ \mbox{and}  \ \ \left\{\sum_{i=0}^{k-1} |X_{(i+1)T/n}-X_{iT/n}|^3-kh(n)\right\}_{0\leq k\leq n}$$
are, respectively, $\mathbb Q$-martingale and a $\mathbb Q$-supermartingale, with respect to the filtration
$\{\calF^W_{kT/n}\}_{0\leq k\leq n}$.
Denote by $\hat{\mathbb Q}$ the unique probability measure such that
 $\hat{\mathbb Q}\sim\mathbb P$ and $(X_t)_{t\in[0,T]}$ is $\hat{\mathbb Q}$-martingale.
From (\ref{2.scale1}) it follows that for sufficiently large $n$, we have
$\hat{\mathbb Q}\in\calQ^n$ and so $\calQ^n\neq\emptyset$.


We now represent the Radon--Nykodim derivative using Girsanov's kernels. For any
probability measure $\mathbb Q\sim\mathbb P$, let $\psi^{\mathbb Q}$ be an $(\calF_t^W)$-progressively measurable process such that $\int_0^T|\psi^{\mathbb Q}_s|^2ds<\infty$, $\PP$-a.s.~ and
\begin{align}\label{eq_RN}
\frac{d\Q}{d\PP}|{\calF_t^W}=\exp\Big(\int_0^t\psi^{\mathbb Q}_sdW_s-\frac{1}{2}\int_0^t|\psi^{\mathbb Q}_s|^2ds\Big),\qquad t\in[0,T],
\end{align}
We refer to $\psi^{\mathbb Q}$ as the {\it Girsanov kernel} associated with $\Q$.
The dynamics of $X$ can be written equivalently as
\begin{align}\notag
X_t=X_0+\sigma W^{\mathbb Q}_t+\sigma\int_{0}^t \psi^{\mathbb Q}_s ds+\mu t,\qquad t\in[0,T],
\end{align}
where $W^\Q_t:=W_t-\int_0^t\psi^{\mathbb Q}_sds$ is a Wiener process under $\Q$.

We arrive at the
dual representation for the certainty equivalent of $f^n(X)$, given in \eqref{eq_c}.
Although this dual representation is quite standard (under the appropriate
 growth conditions), since we could not find
a direct reference, we provide a self contained proof.
\begin{prop}\label{prop.dual}
Let $n\in\mathbb N$ be sufficiently large such that $\calQ^n\neq\emptyset$ and fix an arbitrary $\lambda>0$.
Then,
\begin{align}
\begin{split}
\label{dual}
c(n,\lambda)=\sup_{\mathbb Q\in\calQ^n}\mathbb E_{\mathbb Q}\left[f^n(X)-\frac{1}{2\lambda}\int_0^T|\psi^{\mathbb Q}_s|^2ds\right].
\end{split}
\end{align}
\end{prop}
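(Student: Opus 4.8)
The plan is to establish \eqref{dual} by the standard duality between exponential utility maximization and relative entropy minimization, adapted to the present discrete-trading setup with the extra power options. Write $c(n,\lambda)$ as in \eqref{eq_c}. The first step is the easy ``$\geq$'' direction: for any $\Q\in\calQ^n$ and any admissible $(\gamma,\delta)\in\calA^n$, the process $V^{\gamma,\delta}_{kT/n}$ is a $\Q$-supermartingale. Indeed the stock increments contribute a $\Q$-martingale since $(X_{kT/n})$ is a $\Q$-martingale, while the $\delta_k\ge 0$ positions in the power options, by the $\Q$-supermartingale property of $\{\sum_i |X_{(i+1)T/n}-X_{iT/n}|^3-kh(n)\}$ together with $\delta_k\ge0$ and its $\calF^W_{kT/n}$-measurability, contribute a $\Q$-supermartingale. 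Hence $\E_\Q[V^{\gamma,\delta}_T]\le 0$. Then, combining Jensen's inequality (applied to the convex function $x\mapsto e^{-\lambda x}$) with the entropy inequality $\E_\Q[Y]\le \log\E_\PP[e^Y]+\E_\Q[\log\frac{d\Q}{d\PP}]$ in the form $\E_\PP[e^{-\lambda(V^{\gamma,\delta}_T-f^n(X))}]\ge \exp\big(\E_\Q[-\lambda(V^{\gamma,\delta}_T-f^n(X))]-\E_\Q[\log\frac{d\Q}{d\PP}]\big)$, and using $\E_\Q[V^{\gamma,\delta}_T]\le0$, gives
\[
\frac1\lambda\log\E_\PP\big[e^{-\lambda(V^{\gamma,\delta}_T-f^n(X))}\big]\ \ge\ \E_\Q\big[f^n(X)\big]-\frac1\lambda\E_\Q\Big[\log\tfrac{d\Q}{d\PP}\Big].
\]
Finally I rewrite the entropy via the Girsanov kernel: since $W^\Q$ is a $\Q$-Brownian motion, $\log\frac{d\Q}{d\PP}=\int_0^T\psi^\Q_s dW_s-\frac12\int_0^T|\psi^\Q_s|^2ds=\int_0^T\psi^\Q_s dW^\Q_s+\frac12\int_0^T|\psi^\Q_s|^2ds$, and (after a localization/truncation argument using the finite-entropy assumption to justify that the stochastic integral is a true $\Q$-martingale) $\E_\Q[\log\frac{d\Q}{d\PP}]=\frac12\E_\Q\int_0^T|\psi^\Q_s|^2ds$. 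Taking the infimum over $(\gamma,\delta)$ and the supremum over $\Q\in\calQ^n$ yields $c(n,\lambda)\ge$ the right-hand side of \eqref{dual}.

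The harder ``$\leq$'' direction requires exhibiting, for a near-optimal $\Q^*\in\calQ^n$, an admissible strategy achieving the bound, or equivalently invoking a known attainment/duality result. The cleanest route is to reduce to a finite-dimensional problem: the model lives on the $n+1$ random variables $X_0, X_{T/n},\dots,X_T$ (equivalently the increments), and the relevant $\sigma$-algebra for the payoff $f^n(X)$, the strategies, and the martingale/supermartingale constraints is the discrete filtration $\{\calF^W_{kT/n}\}$. So I would first argue that both sides of \eqref{dual} are unchanged if we replace $(\Omega,\calF^W,\PP)$ by the discrete-time filtered space generated by the increments $\xi_k:=X_{(k+1)T/n}-X_{kT/n}$ (Gaussian with mean $\mu T/n$, variance $\sigma^2 T/n$), with $\calQ^n$ becoming the set of equivalent measures under which $(\sum_{i<k}\xi_i)$ is a martingale and $(\sum_{i<k}|\xi_i|^3-kh(n))$ a supermartingale. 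Here the exponential moment condition \eqref{2.1} and the fact that $e^{\alpha|\xi_k|^3}$ is \emph{not} integrable are the two features to keep track of; note $f^n$ has linear growth so the primal value is finite, and $h(n)>0$ with $\hat\Q\in\calQ^n$ ensures the dual feasible set is nonempty.

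On this discrete space the primal problem $\inf_{(\gamma,\delta)}\E_\PP[e^{-\lambda(V^{\gamma,\delta}_T-f^n(X))}]$ is a finite-horizon, finite-``dimensional'' (at each step) convex optimization with the linear budget constraints from the traded stock and the one-sided constraint $\delta_k\ge0$ from the power options. I would solve it by dynamic programming backward over $k=n,n-1,\dots,0$, observing that the one-step problem at each node is
\[
\inf_{\gamma\in\R,\ \delta\ge0}\ \E\big[e^{-\lambda(\gamma\xi + \delta(|\xi|^3-h(n)))}\,G\big]\,\big|\,\calF^W_{kT/n},
\]
with $G$ the (positive, $\calF^W_{(k+1)T/n}$-measurable) continuation value; strict convexity in $(\gamma,\delta)$ and the finiteness coming from \eqref{2.1} plus $\delta\ge0$ (which keeps $e^{-\lambda\delta(|\xi|^3-h(n))}\le e^{\lambda\delta h(n)}$ bounded) give existence of a minimizer, and the first-order/KKT conditions at the optimum produce, node by node, the Girsanov kernel of the dual-optimal $\Q^*$ together with the supermartingale slackness (the power-option constraint binds exactly where the multiplier is positive). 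Pasting these one-step optimizers gives a measure $\Q^*\in\calQ^n$ for which equality holds in the chain of inequalities from the first part (Jensen and the entropy inequality are tight precisely at the optimizer of the exponential problem), establishing $c(n,\lambda)\le\E_{\Q^*}[f^n(X)-\frac1{2\lambda}\int_0^T|\psi^{\Q^*}_s|^2ds]$ and hence \eqref{dual}.

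The main obstacle I anticipate is the rigorous handling of integrability at the boundary: one must ensure the dual-optimal kernel $\psi^{\Q^*}$ indeed has $\frac12\E_{\Q^*}\int_0^T|\psi^{\Q^*}|^2ds=\E_{\Q^*}[\log\frac{d\Q^*}{d\PP}]<\infty$ (no entropy ``leakage''), that the stochastic integral $\int\psi^{\Q^*}dW^{\Q^*}$ is a genuine $\Q^*$-martingale rather than merely a local one, and that the KKT argument for the power options is valid despite $|\xi|^3$ not having exponential moments under $\PP$ — the sign constraint $\delta\ge0$ is what saves this, but it has to be used carefully when differentiating under the expectation. A localization argument, using that everything reduces to finitely many steps and that the continuation values are bounded above on each step by the linear growth of $f$, should close this gap.
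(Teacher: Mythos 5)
Your ``$\geq$'' direction is essentially the paper's argument (Legendre--Fenchel/entropy inequality plus $\E_{\Q}[V^{\gamma,\delta}_T]\le 0$), but note one point you skip: you assert outright that $V^{\gamma,\delta}$ is a $\Q$-supermartingale, whereas $\gamma_k,\delta_k$ are unbounded $\calF^W_{kT/n}$-measurable random variables, so the increments need not be $\Q$-integrable. The paper first proves $\E_{\Q}\bigl[\max(0,-V^{\gamma,\delta}_T)\bigr]<\infty$ (Cauchy--Schwarz with \eqref{2.1}) and then uses that a local supermartingale with integrable negative part has nonpositive expectation; some such step is needed before you may write $\E_\Q[V^{\gamma,\delta}_T]\le 0$. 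The identification of the entropy with $\tfrac12\E_\Q\int_0^T|\psi^{\Q}_s|^2ds$ under finite entropy is fine.

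For ``$\leq$'' you propose a genuinely different route (reduction to a discrete model plus backward dynamic programming and one-step KKT), while the paper proves attainment of the dual supremum by a Koml\'os argument and uniform integrability, then uses Frittelli's perturbation and separation arguments together with Napp's lemma to write the optimizer's density as $e^{-\xi_0}/\E_{\hat\PP}[e^{-\xi_0}]$ with $\xi_0=V^{\gamma_0,\delta_0}_T-R_0$, $R_0\ge 0$, and concludes by a short chain of inequalities. As written, your plan has genuine gaps. First, the claimed equivalence with a purely discrete model is not automatic: the constraints defining $\calQ^n$ are with respect to the Brownian grid filtration $\{\calF^W_{kT/n}\}$, and the dual penalty is the relative entropy on the whole Brownian $\sigma$-field; you would have to show that projecting $d\Q/d\PP$ onto $\sigma(X_0,\dots,X_T)$ keeps the measure in $\calQ^n$ (this uses that increments are $\PP$-independent of $\calF^W_{kT/n}$) and does not decrease the dual objective, and conversely that a discrete candidate extends to an element of $\calQ^n$ with the same entropy. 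Second, the one-step KKT step is exactly where the duality content lies and is not a routine convexity exercise: you need existence and a measurable selection of the optimizers $(\gamma_k^\ast,\delta_k^\ast)$ across nodes, justification of (one-sided) differentiation under the expectation at $\delta=0$ when $|\xi|^3$ has no exponential moments, and verification that the pasted measure is equivalent to $\PP$, lies in $\calQ^n$, and has finite entropy equal to $\tfrac12\E\int_0^T|\psi|^2ds$; moreover, tightness of your Jensen/entropy chain at the candidate measure is precisely what fails in general when the primal infimum is not attained, which is why the paper resorts to the closure statement ($\xi_0$ only lies in the $L^1(\Q)$-closure of $\{V^{\gamma,\delta}_T\}-L^\infty_+$) rather than to an exact first-order condition. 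Your closing paragraph correctly identifies some of these issues, but they are the substance of the proof rather than technical residue, so the ``$\leq$'' half remains an outline rather than a proof.
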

\begin{proof}
Since for any $(\gamma,\delta)\in \calA^n$ and $\lambda>0$ we have
$V^{\lambda \gamma,\lambda \delta}_{kT/n}=\lambda V^{\gamma,\delta}_{kT/n}$ for all $k=0,1,...,n$, then
without loss of generality we take $\lambda=1$.
As usual, the proof rests on the classical Legendre--Fenchel duality inequality
\begin{equation}\label{classic}
xy \leq e^x + y(\log y-1), \quad x\in\R,\; y>0, \qquad \text{ with `$=$' iff } y=e^x.
\end{equation}
We start with proving the inequality `$\geq$' in \eqref{dual}.

Let $(\gamma,\delta)\in\calA^n$ that satisfies
$$\E_{\mathbb P}\left[\exp\left(-\left(V^{\gamma,\delta}_T-f^n(X)\right)\right)\right]<\infty.$$
Choose an arbitrary $\mathbb Q\in\calQ^n$.
From the
 Cauchy--Schwarz inequality and
(\ref{2.1}) it follows that
$\E_{\mathbb P}[e^{-V^{\gamma,\delta}_T/2}]<\infty$.
This together with (\ref{classic})
for  $x=\max(0,-V^{\gamma,\delta}_T/2)$ and $y=\frac{d\mathbb Q}{d\mathbb P}$, gives that
$$
\E_{\mathbb Q}\left[\max\left(0,-V^{\gamma,\delta}_T\right)\right]
=2\E_{\mathbb P}\left[\frac{d\mathbb Q}{d\mathbb P}\max\left(0,-V^{\gamma,\delta}_T/2\right)\right]<\infty.
$$
From (\ref{portfolio}) it follows that the portfolio value process $\{V^{\gamma,\delta}_{kT/n}\}_{0\leq k\leq n}$
is a local $\mathbb Q$-supermartingale, and so from
\cite[Proposition 9.6]{FollmerSchied:16}
and the inequality $\E_{\mathbb Q}\big[\max(0,-V^{\gamma,\delta}_T)\big]<\infty$ we get
$\E_{\mathbb Q}[V^{\gamma,\delta}_T]\leq 0$.

To conclude our claim we can use (\ref{classic}) again to show that for any $z>0$, the following estimates hold
\begin{align*}
&\E_{\mathbb P}\left[\exp\left(-\left(V^{\gamma,\delta}_T-f^n(X)\right)\right)\right]\\
&\quad\ge\E_{\mathbb P}\left[\left(e^{- V^{\gamma,\delta}_T}+ V^{\gamma,\delta}_T
z e^{-f^n(X)} \frac{d\mathbb Q}{d\mathbb P}\right)e^{ f^n(X)}\right] \\
&\quad\geq  \E_{\mathbb P}\left[\left(-z e^{- f^n(X)}\frac{d\mathbb Q}{d\mathbb P}\left(\log
\left(ze^{- f^n(X)} \frac{d\mathbb Q}{d\mathbb P}\right)-1\right)\right)e^{ f^n(X)}\right] \\
&\quad=-z\left(\log z -1\right)+z\E_{\mathbb Q}\left[f^n(X)-\log \frac{d\mathbb Q}{d\mathbb P}\right].
\end{align*}
Indeed, the first inequality follows since $\E_{\mathbb Q}[V^{\gamma,\delta}_T]\leq 0$ and the second inequality follows by \eqref{classic} with the choice $x=-V^{\gamma,\delta}_T$ and $y=ze^{-f^n(X)} d\mathbb Q/d\mathbb P$. Finally, taking supermum over $z$ in the last expression, one obtains from
 (\ref{classic}) that the supremum is $\exp(\E_{\mathbb Q}[f^n(X)-\log (d\mathbb Q/d\mathbb P)])$.
 Together with \eqref{eq_RN} it implies the inequality `$\geq$' in \eqref{dual}.

Next, we prove the reversed inequality `$\leq$' in \eqref{dual}.
Define the probability measure $\hat{\mathbb P}$ by
\begin{equation*}
\frac{d \hat{\mathbb P}}{d\mathbb P}=\frac{e^{ f^n(X)}}{\E_{\mathbb P}[e^{f^n(X)}]}.
\end{equation*}
Without loss of generality we can assume that the right-hand side of
(\ref{dual}) is finite. Thus for any $\mathbb Q\in\calQ^n$
\begin{equation*}
\E_{\mathbb Q}\left[\log  \frac{d\mathbb Q}{d \hat{\mathbb P}}\right]=\E_{\mathbb Q}\left[\log \frac{d\mathbb Q}{d\mathbb P }-f^n(X)\right]+
\log\left(\E_{\mathbb P}[e^{f^n(X)}]\right).
\end{equation*}
Next, we show that the supremum in (\ref{dual}) is attained.

From the well-known Komlos-argument, see e.g., \cite[Lemma A1.1]{DS94}, we obtain a maximizing sequence $\{\mathbb Q_m\}_{m\in\N}\subset \calQ^n$, for which $Z_m:=d\mathbb Q_m/d\hat{\mathbb P}$ converges almost surely as $m\to\iy$. Without loss of generality, $\{H(\mathbb Q_m|\hat{\mathbb P})\}_{m\in\N}$ can be assumed to be bounded, where
$$H(\mathbb Q_m|\hat{\mathbb P}):=\E_{\hat{\mathbb P}}\left[Z_m\log Z_m\right]=
\E_{\mathbb Q_m}\left[\log \frac{d\mathbb Q_m}{d\hat{\mathbb P}}\right].$$
Observe that the function $y\rightarrow \frac{1}{e}+ y\log y$ is non negative. Hence,
$$\lim_{M\rightarrow\infty}\sup_{m\in\mathbb N}\mathbb E_{\hat{\mathbb P}}\left[Z_m \one_{\{Z_m>M\}}\right]\leq
\lim_{M\rightarrow\infty}\frac{1}{\log M}\sup_{m\in\mathbb N}\left(\frac{1}{e}+H(\mathbb Q_m|\hat{\mathbb P})\right)=0$$
where $\one_{\cdot}$ equals $1$ on the event $\cdot$ and $0$ otherwise.
Thus,
$(\hat{\mathbb P}\circ (Z_m)^{-1})_{m\in\mathbb N}$ are uniformly integrable, and so
the
convergence holds also in $L^1(\hat {\mathbb P})$.
We conclude that $Z_0:= \lim_{m\rightarrow\infty} Z_m$ yields the density (Radon-Nikodym derivative with respect to $\hat{\mathbb P}$)
 of a probability measure $\mathbb Q_0$. From Fatou's lemma (the function $y\rightarrow y\log y$ is bounded from below) we get $H(\mathbb Q_0|\hat{\mathbb P})\leq \lim_{m\rightarrow\infty} H(\mathbb Q_m|\hat{\mathbb P})$ and
\begin{align*}
&\E_{\mathbb Q_0}\left[|X_{(k+1)T/n}-X_{kT/n}|^3\big|\mathcal F_{kT/n}\right]\\
&\quad\leq\lim\inf_{m\rightarrow\infty}
\E_{\mathbb Q_m}\left[|X_{(k+1)T/n}-X_{kT/n}|^3\big|\mathcal F_{kT/n}\right]\leq h(n), \qquad \forall k\leq n.
\end{align*}
So, attainment of the supremum in (\ref{dual}) is proven if we can argue that $\mathbb Q_0$
is a martingale measure for $(X_{kT/n})_{0\leq k\leq n}$. To this end, it is sufficient to argue that for any $k$ the measures $(\mathbb Q_m\circ (X_{kT/n})^{-1})_{m\in\mathbb N}$
are uniformly integrable in the sense that
\begin{align}\label{intun}
\lim_{M\to\iy}\;\sup_{m\in\mathbb N}\E_{\mathbb Q_m}\left[|X_{kT/n}|\one_{\{|X_{kT/n}|>M\}}\right]=0.
\end{align}

From the symmetry of the Brownian motion, the simple inequality
$$\exp\left(\sup_{0\leq t \leq T} |z_t|^p\right)\leq \exp\left(|\sup_{0\leq t \leq T} z_t|^p\right)+\exp\left(|\inf_{0\leq t \leq T} z_t|^p\right), \ \  \forall z\in \calC[0,T] $$
and the fact that (under $\mathbb P$)
$\sup_{0\leq t\leq T}W_t\sim |W_T|$ we obtain
\begin{align}\label{exp}
&\mathbb E_{\mathbb P}\left[\exp\left(\sup_{0\leq t\leq T}|X_t|^p\right)\right]\leq 2
\mathbb E_{\mathbb P}\left[\exp\left(\left(X_0+|\mu| T+\sigma |W_T|\right)^p\right)\right]\nonumber\\
&=
4\int_{0}^{\infty} \exp\left(\left(X_0+|\mu| T+\sigma x\right)^p\right)\frac{\exp(-x^2/2)}{\sqrt{2\pi}}dx
<\infty,  \quad \forall p\in (0,2).
\end{align}
By applying the H{\"o}lder's inequality, (\ref{2.1}) and (\ref{exp})
it follows that for any $k$,
$\E_{\hat{\mathbb P}}[\exp(|X_{kT/n}|^{3/2})]<\infty$.
Hence,
from (\ref{classic})
\begin{align*}
\sup_{m\in\mathbb N}\E_{\mathbb Q_m}\left[|X_{kT/n}|^{3/2}\right]
&=
\sup_{m\in\mathbb N}\E_{\hat{\mathbb P}}\left[Z_m|X_{kT/n}|^{3/2}\right]\\
&\leq
\E_{\hat{\mathbb P}}\left[\exp(|X_{kT/n}|^{3/2})\right]+\sup_{m\in\mathbb N}H(\mathbb Q_m|\hat{\mathbb P})<\infty
\end{align*}
and (\ref{intun}) follows.

Now, we arrive at the final step of the proof.
 We follow the approach in \cite{Fritelli:00}. Indeed, the perturbation argument
 for the proof of Theorem 2.3 there shows that the entropy minimizing measure's density is of
 the form
$$Z_0=\frac{e^{-\xi_0}}{\E_{\hat{\mathbb P}}[e^{-\xi_0}]}$$ for some random variable
 $\xi_0$ with $\E_{\mathbb Q_0}[\xi_0]=0$ and $\E_{\mathbb  Q}[\xi_0]\leq 0$
 for any $\mathbb Q \in \calQ^n$.
 The separation argument for \cite[Theorem 2.4]{Fritelli:00} shows that $\xi_0$ is contained in the $L^1(\mathbb Q)$-closure of
 $\{V^{\gamma,\delta}_T: \; (\gamma,\delta)\in\calA^n\}-L^{\infty}_{+}$ for any $\mathbb Q \in \calQ^n$, where $L^{\infty}_{+}$ is the set of all non negative random variables, which are uniformly bounded.
 Since $\calQ^n\neq\emptyset$, Lemma 3.1 in \cite{Napp:03} yields that $\xi_0$ must be of the same form $\xi_0=V^{\gamma_0,\delta_0}_T-R_0$ for
  some $(\gamma_0,\delta_0)\in\calA^n$ and some $R_0 \geq 0$.
As a result, we may bound the left-hand side in (\ref{dual}) as follows
\begin{align*}
&\log\left(\inf_{(\gamma,\delta)\in\calA^n}\E_{\mathbb P}\left[\exp\left(-\left(V^{\gamma,\delta}_T-f^n(X)\right)\right)\right]\right)\\
&\quad\le\log\left(\E_{\mathbb P}[e^{f^n(X)}]\right)+\log \left(\E_{\hat{\mathbb P}}[\exp(-V^{\gamma_0,\delta_0}_T)]\right)\\
&\quad\leq
\log\left(\E_{\mathbb P}[e^{f^n(X)}]\right)+\log \left(\E_{\hat{\mathbb P}}[\exp(-\xi_0)]\right)\\
&\quad=\log\left(\E_{\mathbb P}[e^{f^n(X)}]\right)+\E_{\mathbb Q_0}
\left[\xi_0+\log \left(\E_{\hat{\mathbb P}}[\exp(-\xi_0)]\right)\right]\\
&\quad=
\log\left(\E_{\mathbb P}[e^{f^n(X)}]\right)- \E_{\mathbb Q_0} \left[\log Z_0\right]\\
&\quad=\sup_{\mathbb Q\in\calQ^n}\E_{\mathbb Q}\left[f^n(X)-\frac{1}{\lambda}\log \frac{d\mathbb Q}{d\mathbb P}\right].
\end{align*}
The first inequality follows by the definition of $\hat \PP$; the second inequality uses $R_0\ge 0$; the first equality is derived by $\E_{\Q_0}[\xi_0]=0$; the second equality follows by the definition of $\xi_0$; and finally, the last equality is deduced by our choice of $\Q_0$ as a measure that attains the supremum over $\calQ^n$.

 Together with \eqref{eq_RN} we obtain the inequality `$\leq$' in \eqref{dual}.

\qed
\end{proof}

The main technical challenge in this paper is showing that the scaling limit of the value function of the {\it drift control problem} from the right-hand side of \eqref{dual} equals the value of the {\it volatility control problem} from the right-hand side of \eqref{2.3}. This is summarized in the next theorem, whose proof is given in the next Section.
\begin{thm}\label{thm.2}
The following scaling limit holds:
\begin{align}
\notag
& \lim_{n\rightarrow\infty}
 \sup_{\mathbb Q\in\calQ^n}\mathbb E_{\mathbb Q}\Big[f^n(X)-\frac{1}{2\lambda}\int_0^T|\psi^{\mathbb Q}_s|^2ds\Big]
 \\\label{2.33}
 &\quad=
 \sup_{\nu\in\calV}\E_{\mathbb P}\Big[f\left(X^{(\nu)}\right)- \frac{1}{2\ell{T}}\int_{0}^T \pen\left(\frac{\nu_t}{\sigma^2}\right)dt \Big].
\end{align}
\end{thm}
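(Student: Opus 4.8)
The plan is to establish the two inequalities ``$\le$'' and ``$\ge$'' in \eqref{2.33} separately, both organised around the following picture. For $\mathbb Q\in\calQ^n$ with Girsanov kernel $\psi^{\mathbb Q}$ of the natural size $\int_0^T|\psi^{\mathbb Q}_s|^2\,ds=O(n)$ (so that the penalty is $O(1)$), the grid increment $X_{(k+1)T/n}-X_{kT/n}$ is, conditionally on $\calF^W_{kT/n}$, centered --- this is exactly the martingale requirement built into $\calQ^n$ --- with a conditional variance of the form $v_k\,\sigma^2T/n$ for a predictable ``inflation factor'' $v_k>0$; correspondingly the interpolation $p^n(X)$ behaves like the continuous martingale $X^{(\nu)}$ with $\nu_{kT/n}\approx\sigma^2 v_k$. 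On the cost side, the entropy identity $\E_{\mathbb Q}[\log\tfrac{d\mathbb Q}{d\mathbb P}]=\tfrac12\E_{\mathbb Q}[\int_0^T|\psi^{\mathbb Q}_s|^2\,ds]$, the conditional chain rule for relative entropy, the data-processing inequality, the fact that among laws of prescribed mean and variance the Gaussian minimises relative entropy to a Gaussian, and the evaluation $D\big(\mathcal N(0,v\sigma^2 a)\,\|\,\mathcal N(\mu a,\sigma^2 a)\big)=\tfrac12\pen(v)+\tfrac{\mu^2 a}{2\sigma^2}$ with $a=T/n$, together show that step $k$ costs at least $\tfrac12\pen(v_k)+O(1/n)$ of relative entropy. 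Summing, dividing by $\lambda=n\ell$, and passing to the limit gives that the penalty is asymptotically no smaller than $\frac{1}{2\ell T}\int_0^T\pen(\nu_t/\sigma^2)\,dt$, with equality for the optimal (near-Gaussian, linear-feedback) kernel. Lemma \ref{lem:main1} is the rigorous one-step incarnation of this dichotomy: on the discrete grid it pins down, for a prescribed inflation, the cheapest admissible Girsanov perturbation, and its explicit form is the building block of both bounds.

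For the upper bound ``$\le$'', take a maximising sequence $\mathbb Q_n\in\calQ^n$; discarding the nonpositive part of the penalty reduces matters to $\E_{\mathbb Q_n}[\int_0^T|\psi^{\mathbb Q_n}_s|^2\,ds]=O(n)$. The obstruction, as flagged in the introduction, is that the consistent price systems $\mathbb Q_n$ --- equivalently the kernels $\psi^{\mathbb Q_n}$, which live at scale $\sqrt n$ --- need not be tight, so one cannot simply extract a weak limit and read off its volatility. I would instead invoke the strong invariance principle of \cite{dol-2012} through Lemma \ref{lem:main2} to couple, on an enlarged space, $p^n(X)$ under $\mathbb Q_n$ with a process $X^{(\nu_n)}$ of the limiting form so that $\|p^n(X)-X^{(\nu_n)}\|\to 0$ in probability while $\frac{1}{2n\ell}\int_0^T|\psi^{\mathbb Q_n}_s|^2\,ds$ dominates $\frac{1}{2\ell T}\int_0^T\pen(\nu_{n,t}/\sigma^2)\,dt$ up to a vanishing error; this is where the scaling \eqref{2.scale2} enters, to control the remainder terms and the third-moment (power-option) constraint. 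A truncation argument, using $\pen(y)\to\infty$ as $y\downarrow0$ or $y\uparrow\infty$ together with the uniform bound on the penalty, lets one take $\nu_n\in\calV$. Since $f$ is Lipschitz for the uniform norm, $\E_{\mathbb Q_n}[f^n(X)]=\E\big[f(p^n(X))\big]=\E\big[f(X^{(\nu_n)})\big]+o(1)$, so the objective along $\mathbb Q_n$ is at most $\E_{\mathbb P}\big[f(X^{(\nu_n)})-\frac{1}{2\ell T}\int_0^T\pen(\nu_{n,t}/\sigma^2)\,dt\big]+o(1)\le\sup_{\nu\in\calV}(\cdots)+o(1)$.

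For the lower bound ``$\ge$'', fix $\nu\in\calV$; after approximating it --- in a way that barely changes $\E_{\mathbb P}[f(X^{(\nu)})-\frac{1}{2\ell T}\int_0^T\pen(\nu_t/\sigma^2)\,dt]$ --- by a control bounded away from $0$ and $\infty$ and depending regularly on the path, I would construct $\mathbb Q_n\in\calQ^n$ explicitly. On $[kT/n,(k+1)T/n)$ one reads the target inflation $v_k=\nu_{kT/n}/\sigma^2$ off the already-realised path and applies the recipe of Lemma \ref{lem:main1}: a path-dependent (linear-feedback) Girsanov kernel, depending on the within-interval Brownian motion, which (i) keeps $(X_{kT/n})_{0\le k\le n}$ a $\mathbb Q_n$-martingale, (ii) gives $X_{(k+1)T/n}-X_{kT/n}$ conditional variance $\approx v_k\sigma^2T/n$, (iii) has conditional relative-entropy cost $\approx\tfrac12\pen(v_k)$, and (iv) keeps the third conditional moment $O(n^{-3/2})=o(h(n))$ by \eqref{2.scale1}, so that the power-option supermartingale constraint holds. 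Iterating over $k$ amounts to solving a path-dependent SDE whose solution is the desired $X$ under $\mathbb Q_n$; a martingale functional central limit theorem gives $p^n(X)\Rightarrow X^{(\nu)}$ under $\mathbb Q_n$, while --- the $v_k$ now being regular --- the Riemann-sum computation of the first paragraph makes the penalty converge to $\frac{1}{2\ell T}\int_0^T\pen(\nu_t/\sigma^2)\,dt$. Hence the objective along $\mathbb Q_n$ converges to $\E_{\mathbb P}[f(X^{(\nu)})-\frac{1}{2\ell T}\int_0^T\pen(\nu_t/\sigma^2)\,dt]$, and taking the supremum over $\nu\in\calV$ finishes ``$\ge$''.

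I expect the upper bound to be the crux: one must substitute the strong approximation of \cite{dol-2012} for the usual tightness-plus-weak-convergence argument, and the delicate point is to run this coupling while simultaneously keeping the comparison of the prelimit and limiting penalties pointed in the correct direction uniformly in $n$ --- precisely the combined role of Lemmas \ref{lem:main1} and \ref{lem:main2}. The lower bound, though also technical (the path-dependent SDE and the verification of (i)--(iv), in particular the tension between the martingale constraint and the variance inflation), is essentially a direct construction once Lemma \ref{lem:main1} is available.
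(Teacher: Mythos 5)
Your proposal follows essentially the same route as the paper: the upper bound via the one-step penalty comparison of Lemma \ref{lem:main1} combined with the strong-invariance coupling of Lemma \ref{lem:main2} (where \eqref{2.scale2} enters), and the lower bound via an explicit construction from the optimal kernels of Lemma \ref{lem:main1}, realized through a path-dependent SDE and the supermartingale check using \eqref{2.scale1}. The only minor deviations --- your information-theoretic heuristic for the per-step cost (which you anyway defer to Lemma \ref{lem:main1}) and invoking a martingale CLT in $n$ where the paper instead matches the grid laws exactly for each fixed $n$ after letting the truncation level $A\to\infty$ --- do not change the architecture, so the proposal is correct and essentially identical in approach.
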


We end this section with proving Theorem \ref{thm.1}.
\begin{proof}[Proof of Theorem \ref{thm.1}]
Proposition \ref{prop.dual} and Theorem \ref{thm.2} imply that
\begin{align}\notag
\lim_{n\rightarrow\infty} c(n,n\ell)= \sup_{\nu\in\calV}\E_{\mathbb P}\Big[f\left(X^{(\nu)}\right)- \frac{1}{2\ell{T}}\int_{0}^T \pen\left(\frac{\nu_t}{\sigma^2}\right)dt \Big].
\end{align}
Utilizing \eqref{eq_pi_c}, the proof that $\lim_{n\rightarrow\infty} c(n,n\ell)=\lim_{n\rightarrow\infty} \pi(n,n\ell)$ follows once we show that
\begin{align}\label{bound2}
\lim_{n\to\infty}\frac{1}{n\ell}
\log\left(\inf_{(\gamma,\delta)\in\calA^n}\E_{\mathbb P}\left[\exp\left(-n\ell V^{\gamma,\delta}_T\right)\right]\right)=0.
\end{align}
We prove this in two steps. First, notice that we can use $(\gamma,\delta)\equiv (0,0)$ to get an upper bound as follows:
\begin{align}\label{bound3}
\inf_{(\gamma,\delta)\in\calA^n}\E_{\mathbb P}\left[\exp\left(-n\ell V^{\gamma,\delta}_T\right)\right]
\le
\E_{\mathbb P}\left[\exp\left(-n\ell V^{0,0}_T\right)\right]=1.
\end{align}
This establishes the relation `$\le$' in \eqref{bound2}, replacing `$=$'.
Next, we show that \eqref{bound2} holds with `$\ge$' instead of equality, which together with the last staement, finishes the proof.

Fix $\ell>0$.
We assume that
$n$ is sufficiently large such that
$\hat{\mathbb Q}\in\mathcal Q^n$, where,
 recall that $\hat{\mathbb Q}$
is the unique martingale measure for the continuous time Bachelier model.

Let $(\gamma,\delta)\in\mathcal A^n$ be such that
$\E_{\mathbb P}[\exp(-n\ell V^{\gamma,\delta}_T)]<\infty$. This can be assumed without loss of generality due to \eqref{bound3}.
Using the same arguments as in the proof of
Proposition \ref{prop.dual} we get
$\mathbb E_{\hat{\mathbb Q}}[V^{\gamma,\delta}_T]\leq 0$.
Thus, from Jensen's inequality for the convex function
$y\rightarrow e^{-n\ell y/2}$ and the Cauchy--Schwarz inequality,  we obtain
$$1\leq \E_{\hat{\mathbb Q}}\left[\exp\left(-n\ell V^{\gamma,\delta}_T/2\right)\right]
\leq\left(\E_{\mathbb P}\left[\exp\left(-n\ell V^{\gamma,\delta}_T\right)\right]\right)^{1/2}
\left(\E_{\mathbb P}\Big[\Big(\frac{d\hat{\mathbb Q}}{{d\mathbb P}}\Big)^2\Big]\right)^{1/2},
$$
and so $\E_{\mathbb P}[\exp(-n\ell V^{\gamma,\delta}_T)]$
is uniformly bounded from below. Thus, we obtain that \eqref{bound2} holds with `$\ge$' instead of equality.
\end{proof}

\section{Proof of Theorem \ref{thm.2}}\label{sec:4}
The proof relies on two bounds, which are provided in two separate subsections.

\subsection{Upper bound}
This section is devoted to the proof of the inequality
`$\leq$' in \eqref{2.33}.
We start with the following lemma which provides an intuition for the penalty term
$\pen(y):=y-\log y-1$, $y>0$.

\begin{lem}\label{lem:main1}
Consider a filtered probability space $(\bar\Omega,\bar\calF,\{\bar\calF_t\}_{t\in[0,T]},\bar\PP)$
which supports an $(\bar\calF_t)$-Wiener process $\bar W$. As usual we assume that the filtration
  $(\bar\calF_t)$ is right-continuous and contains the null sets. Then, for any time instances $0\le t_1<t_2\le T$ and every $(\bar\calF_t)$-progressively measurable process $\psi$, satisfying the conditions $\int_{t_1}^{t_2}\psi^2_udu<\iy$ and $\bar\E[\int_{t_1}^{t_2}\psi_udu\mid\calF_{t_1}]=0$, one has,
\begin{align}\label{eq:lem}
\bar\E\Big[\int_{t_1}^{t_2}\psi^2_udu\;\big|\;\calF_{t_1}\Big]
\ge
\pen\Big(\frac{1}{t_2-t_1}\bar\E\Big[\Big(\bar W_{t_2}-\bar W_{t_1}+\int_{t_1}^{t_2}\psi_udu\Big)^2\;\big|\;\calF_{t_1}\Big]\Big).
\end{align}
Moreover, set the parameterized (by $\beta$) processes $(\theta^\beta_t)_{t\in[t_1,t_2]}$
and $(\vartheta^\beta_t)_{t\in[t_1,t_2]}$
by
\begin{align}\label{theta}
&\theta^\beta_t:=\int_{t_1}^t(\beta(t_2-t_1)-(t_2-s))^{-1}d\bar W_s,\ \ \beta>1 \\
&\vartheta^{\beta}_t:=-\int_{t_1}^t(\beta(t_2-t_1)+(t_2-s))^{-1}d\bar W_s,\ \ \beta>0.
\label{vartheta}
\end{align}
Then \eqref{eq:lem} holds with equality for $\psi_t$, $t\in [t_1,t_2]$, given by
\begin{align}
\notag
\psi_t&=\theta^{\bar{\beta}}_t\one_{\Big\{\E\Big[\Big(\bar W_{t_2}-\bar W_{t_1}+\int_{t_1}^{t_2}\psi_udu\Big)^2\;\big|\;\calF_{t_1}\Big]>t_2-t_1\Big\}}\\\label{newnewnew}
&\quad+
\vartheta^{\bar{\beta}}_t\one_{\Big\{\E\Big[\Big(\bar W_{t_2}-\bar W_{t_1}+\int_{t_1}^{t_2}\psi_udu\Big)^2\;\big|\;\calF_{t_1}\Big]<t_2-t_1\Big\}},
\end{align}
where
\begin{align}\label{newnew}
\bar{\beta}=\frac{\E\Big[\Big(\bar W_{t_2}-\bar W_{t_1}+\int_{t_1}^{t_2}\psi_udu\Big)^2\;\big|\;\calF_{t_1}\Big]}{\Big|\E\Big[\Big(\bar W_{t_2}-\bar W_{t_1}+\int_{t_1}^{t_2}\psi_udu\Big)^2\;\big|\;\calF_{t_1}\Big]-(t_2-t_1)\Big|}.
\end{align}
\end{lem}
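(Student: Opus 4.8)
The plan is to establish the inequality \eqref{eq:lem} by a conditional change-of-measure / Girsanov argument that reduces it to the scalar Legendre--Fenchel inequality $\pen(y)=y-\log y-1\le \tfrac12\,\mathbb{E}[\text{something}]$, and then to verify the equality case by explicitly computing the second moment and the energy $\bar\E[\int\psi^2]$ for the candidate processes $\theta^\beta$ and $\vartheta^\beta$. Throughout I would condition on $\calF_{t_1}$ and work on the (regular conditional) probability space, so that without loss of generality $t_1=0$, $\psi$ is a process on $[0,\tau]$ with $\tau:=t_2-t_1$, the drift condition reads $\bar\E[\int_0^\tau\psi_u\,du]=0$, and I must show $\bar\E[\int_0^\tau\psi_u^2\,du]\ge\pen(\rho/\tau)$ where $\rho:=\bar\E[(\bar W_\tau+\int_0^\tau\psi_u\,du)^2]$.

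For the inequality, set $Y_t:=\bar W_t+\int_0^t\psi_u\,du$, so $Y$ is the coordinate process under the measure $\Q$ with $d\Q/d\bar\PP=\exp(-\int_0^\tau\psi_u\,dW_u-\tfrac12\int_0^\tau\psi_u^2\,du)$ on $\calF_\tau$ — i.e. $Y$ is a $\Q$-Brownian motion, hence $\bar\E_\Q[Y_\tau^2]=\tau$. The relative entropy is $H(\bar\PP\mid\Q)=\bar\E[\tfrac12\int_0^\tau\psi_u^2\,du]$ (using the drift/martingale condition to kill the stochastic-integral term, exactly as in the hypothesis $\bar\E[\int_0^\tau\psi_u\,du\mid\calF_{t_1}]=0$ together with a localization that I would need to justify via the finite-energy assumption). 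Now $\rho=\bar\E_{\bar\PP}[Y_\tau^2]$, and by the Donsker--Varadhan / Gibbs variational principle (equivalently, the scalar duality \eqref{classic}) applied to the function $x\mapsto \lambda x^2$ one gets, for the optimal $\lambda$, the bound $\tfrac12\log\bar\E_\Q[e^{\lambda Y_\tau^2}]\ge \lambda\rho - H(\bar\PP\mid\Q)$; choosing $\lambda$ and using the Gaussian computation $\bar\E_\Q[e^{\lambda Y_\tau^2}]=(1-2\lambda\tau)^{-1/2}$ yields precisely $H(\bar\PP\mid\Q)\ge \tfrac12\pen(\rho/\tau)$, i.e. \eqref{eq:lem}. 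The honest way, avoiding the variational principle, is to note that $Y_\tau$ is Gaussian $N(0,\tau)$ under $\Q$ while under $\bar\PP$ it has second moment $\rho$; the entropy $H(\bar\PP\mid\Q)$ restricted to the law of $Y_\tau$ is minimized, over all laws with prescribed second moment $\rho$, by the centered Gaussian $N(0,\rho)$, and $H(N(0,\rho)\mid N(0,\tau))=\tfrac12(\rho/\tau-\log(\rho/\tau)-1)=\tfrac12\pen(\rho/\tau)$. Since $H$ of the full laws dominates $H$ of the marginals, \eqref{eq:lem} follows.

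For the equality case I would compute directly. Fix $\beta$ and consider $\theta^\beta_t=\int_0^t(\beta\tau-(\tau-s))^{-1}\,d\bar W_s$ on $[0,\tau]$ (here $t_1=0$). This is a Gaussian martingale; its integral $\int_0^\tau\theta^\beta_u\,du$ is a centered Gaussian, so the drift condition $\bar\E[\int_0^\tau\theta^\beta_u\,du]=0$ holds automatically. One then computes $\rho=\bar\E[(\bar W_\tau+\int_0^\tau\theta^\beta_u\,du)^2]$ and the energy $\bar\E[\int_0^\tau(\theta^\beta_u)^2\,du]$ in closed form — these are elementary Itô-isometry and Fubini computations with the rational kernel $(\beta\tau-(\tau-s))^{-1}$ — and one checks that the resulting pair $(\rho,\text{energy})$ satisfies energy $=\pen(\rho/\tau)$ with $\rho>\tau$, and moreover that as $\beta$ ranges over $(1,\infty)$ the ratio $\rho/\tau$ sweeps out all of $(1,\infty)$; the explicit inverse map is the formula \eqref{newnew} for $\bar\beta$ in the regime $\rho>\tau$. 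The process $\vartheta^\beta$ with kernel $-(\beta\tau+(\tau-s))^{-1}$ does the symmetric job for the regime $\rho<\tau$, with $\beta$ ranging over $(0,\infty)$; and the definition \eqref{newnewnew} just selects $\theta^{\bar\beta}$ or $\vartheta^{\bar\beta}$ according to whether the conditional second moment exceeds or falls below $t_2-t_1$ (on the null set $\rho=\tau$ take $\psi\equiv0$, which already gives equality since $\pen(1)=0$).

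The main obstacle, I expect, is twofold. First, the rigorous conditioning: passing from the unconditional identities on the explicit space $(\bar\Omega,\bar\calF,\bar\PP)$ to the conditional statement on $\{\bar\calF_t\}$, in particular justifying that $\bar\beta$ in \eqref{newnew} — which is $\calF_{t_1}$-measurable and random — can be plugged into the kernel of \eqref{theta}/\eqref{vartheta} so that the resulting $\psi$ is genuinely $(\bar\calF_t)$-progressively measurable and still satisfies the conditional drift condition; this needs a measurable-selection / regular-conditional-probability argument or an explicit verification that the kernels depend measurably on the parameter. Second, the finite-energy bookkeeping: the hypothesis only gives $\int_{t_1}^{t_2}\psi_u^2\,du<\infty$ a.s., not integrability, so the identification $H(\bar\PP\mid\Q)=\tfrac12\bar\E[\int\psi^2]$ and the step $\bar\E_\Q[Y_\tau^2]=\tau$ require a localization argument (and the inequality \eqref{eq:lem} is vacuous when the right-hand side is $+\infty$, which must be handled separately, e.g. by Jensen $\rho\le$ something or simply noting $\pen$ is real-valued on $(0,\infty)$ so one only needs $\rho\in(0,\infty)$, itself a consequence of the finite-energy assumption via Itô isometry on a localizing sequence). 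The closed-form moment computations for $\theta^\beta,\vartheta^\beta$ are routine but must be done carefully enough to confirm both the equality energy $=\pen(\rho/\tau)$ and the surjectivity of $\beta\mapsto\rho/\tau$ onto the two half-lines.
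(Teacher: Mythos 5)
Your route for the inequality is genuinely different from the paper's: instead of projecting $\psi$ onto the Brownian filtration and using the martingale representation theorem, stochastic Fubini and a pointwise quadratic maximization, you go through relative entropy, data processing to the time-$t_2$ marginal, and the Gaussian maximum-entropy identity $H(N(0,\rho)\,|\,N(0,\tau))=\tfrac12\pen(\rho/\tau)$. That skeleton is attractive, and your plan for the equality case (closed-form It\^o-isometry and Fubini computations for $\theta^\beta,\vartheta^\beta$, surjectivity of $\beta\mapsto\rho/\tau$ onto the two half-lines, and the inversion \eqref{newnew}) is essentially what the paper does; those computations do check out. The conditioning issue you flag for the random $\bar\beta$ is handled in the paper simply by reducing to $t_1=0$, $t_2=1$ with trivial $\calF_{t_1}$ via independent increments and Brownian scaling, so it is not a real obstacle.

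The genuine gap is the Girsanov step. Even after reducing to $\bar\E[\int\psi^2_u\,du]<\infty$ (the only nontrivial case), the stochastic exponential $\mathcal{E}_t=\exp(-\int_{t_1}^t\psi_u\,d\bar W_u-\tfrac12\int_{t_1}^t\psi_u^2\,du)$ is in general only a supermartingale, so the object $\Q$ you define may have total mass strictly less than one; then ``$Y$ is a $\Q$-Brownian motion'', ``$\E_{\Q}[Y_{t_2}^2]=t_2-t_1$'' and ``$H(\bar\PP\,|\,\Q)=\tfrac12\bar\E\int\psi^2$'' are all unjustified. Finite expected energy does not imply the true-martingale property: for instance $\psi_t=-1/R_t$ with $R$ a Bessel(3) process started from $1$ satisfies $\bar\E\int_0^1\psi_t^2\,dt<\infty$, yet $\mathcal{E}=1/R$ is a strict local martingale. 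Your proposed remedy, localization, does not close this hole: under the stopped measures the terminal value $Y_{t_2}$ is no longer exactly Gaussian, and mass can escape in the limit --- which is precisely what happens in the counterexample. The standard repair is the reduction the paper performs at the outset: center $\psi$ and approximate it by bounded simple processes in $L^2(dt\otimes\bar\PP)$ (for which Novikov is immediate), prove the inequality there, and pass to the limit in both sides of \eqref{eq:lem}, using that both the energy and the second moment are continuous along such approximations and that $\pen$ is continuous on $(0,\infty)$; alternatively one may invoke the known entropy bound $H(\mathrm{Law}(Y)\,|\,\mathrm{Wiener})\le\tfrac12\bar\E\int\psi^2$, but its proof itself rests on exactly this kind of approximation (or on a variational representation), so it cannot be taken for free. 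With that repair inserted, the rest of your argument (data processing to the marginal, maximum entropy, and the explicit equality check for $\theta^{\bar\beta},\vartheta^{\bar\beta}$) yields the lemma.
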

\begin{proof}
By the independent increments and the scaling property of Wiener process, we assume without loss of
generality that $t_1=0$, $t_2=1$ and $\calF_{0}$ is the trivial $\sigma$-algebra.

Obviously, (\ref{eq:lem}) holds trivially if $\bar\E[\int_0^1\psi^2_tdt]=\iy$,
hence, for the rest of the proof we also assume that $\bar\E[\int_0^1\psi^2_tdt]<\iy$.
This in turn implies that $\bar\E[\psi_t^2]<\infty$ for almost every $t\in[0,1]$, with respect to Lebesgue measure.
Thus, without loss of generality, we may assume that
$\bar\E[\psi_t]=0$ for any $t\in[0,1]$. Indeed,  set up $\bar\psi_t:=\psi_t-\bar\E[\psi_t], t\in[0,1]$. Then, if we prove \eqref{eq:lem} for $\bar\psi$, then
\begin{align}\notag
\bar\E\Big[\int_0^1\psi^2_tdt\Big]
&\ge
\bar\E\Big[\int_0^1\bar\psi_u^2dt\Big]\\\notag
&
\ge
\pen\Big(\bar\E\Big[\Big(\bar W_1+\int_0^1\bar\psi_tdt\Big)^2\Big]\Big)\\\notag
&=
\pen\Big(\bar\E\Big[\Big(\bar W_1+\int_0^1\psi_tdt\Big)^2\Big]\Big),
\end{align}
where the second inequality follows from \eqref{eq:lem} applied to $\bar\psi$ and the equality follows since
$\int_{0}^{1}\bar\E[\psi_t]dt=0$.

Next, by applying standard density arguments in
$L^2(dt\otimes\bar{\mathbb P})$ we can assume that $\psi$ is a simple process (see Section 3.2 in \cite{kar-shr})
in the sense that $\psi$ is bounded and
there exists a deterministic partition $0=t_0<t_1<...<t_m=1$ such that $\psi$ is a (random) constant
on each interval $(t_i,t_{i+1}]$.
Hence, for the rest of the proof we fix
a simple process $\psi$ which satisfies
$\bar\E[\psi_t]=0$, for all $t\in[0,1]$.

We split the proof into two cases: (I) $\bar \E[(\bar W_1+\int_0^1\psi_t dt)^2]>1$; and (II) $\bar \E[(\bar W_1+\int_0^1\psi_t dt)^2]<1$. When the expected value equals $1$, \eqref{eq:lem} follows immediately since $\pen(1)=0$.

\textbf{Case I: $\bar \E[(\bar W_1+\int_0^1\psi_t dt)^2]>1$}.
Let $\bar\beta$ be given by (\ref{newnew}).
Observe that
$\bar \E[(\bar W_1+\int_0^1\psi_t dt)^2]=\bar{\beta}/(\bar{\beta}-1)$. In order to prove the inequality
(\ref{eq:lem}) it is sufficient to show that
 \begin{align}\notag
 \bar \E\Big[\Big(\bar W_1+\int_0^1\psi_t dt\Big)^2-\bar{\beta}\int_0^1\psi^2_tdt\Big]
&\leq
\frac{\bar{\beta}}{\bar{\beta}-1}-\bar{\beta}\pen\Big(\frac{\bar\beta}{\bar\beta-1}\Big)
\\\label{lag}
&=\bar\beta\log\frac{\bar\beta}{\bar\beta-1}.
\end{align}
Let $(\mathcal F^{\bar W}_t)_{t\in [0,1]}$
be the augmented filtration generated by the Brownian motion $(\bar W_t)_{t\in [0,1]}$
and let $(u_t)_{t\in [0,1]}$ be the optional projection of $\psi$ on $(\mathcal F^{\bar W}_t)_{t\in [0,1]}$ (exists since $\psi$ is bounded).
Set, $v_t:=\psi_t-u_t$, $t\in [0,T]$.
Clearly, $\bar W_{[t,1]}-\bar W_t$ is independent of $\psi_t$ and $\bar W_{[0,t]}$, where $\bar W_{[a,b]}$ is the restriction of $\bar W$ to the interval $[a,b]$.
This together with the fact that
$\mathcal F^{\bar W}_1$ is generated by $\mathcal F^{\bar W}_t$ and $\bar W_{[t,1]}-\bar W_t$ yields
$u_t:=\bar \E[\psi_t|\mathcal F^{\bar W}_t]=
\bar \E[\psi_t|\mathcal F^{\bar W}_1]$ for all $t$.
Thus,
\begin{align}
&\bar \E\left[\left(\bar W_1+\int_0^1\psi_t dt\right)^2-\bar\beta\int_0^1\psi^2_tdt\right]\notag\\
&\qquad=
\bar \E\left[\left(\bar W_1+\int_0^1 u_t dt\right)^2-\bar\beta\int_0^1 u^2_tdt\right]+\bar\E\left[\left(\int_0^1 v_t dt\right)^2-\bar\beta\int_0^1 v^2_tdt\right]\notag\\
&\qquad
\leq \bar \E\left[\left(\bar W_1+\int_0^1 u_t dt\right)^2-\bar\beta\int_0^1 u^2_tdt\right],\label{1}
\end{align}
where the inequality follows from Jensen's inequality and the fact that $\bar\beta>1$.

From the martingale representation theorem and the fact that $\psi$ is simple it follows that there exists
a (jointly) measurable map $\kappa:[0,1]^2\times\Omega\rightarrow\mathbb R$ such that $\kappa_{t,s}$ is $\mathcal F^{\bar W}_{t\wedge s}$
measurable for all $t,s\in [0,1]$ and
$$u_t=\int_{0}^t \kappa_{t,s} d\bar W_s, \qquad dt\otimes\bar{\mathbb P} \ \ \mbox{a.s.}$$
For each $t\in[0,T]$, we are applying the martingale representation theorem for the random variable
$u_t$.
Utilizing the fact that $\psi$ is piecewise constant, we obtain that $\kappa$ is jointly measurable in $s$ and $t$.
This is essential in the sequel when we apply Fubini's theorem.

Define the processes
$$\zeta_s:=\int_{s}^1 \kappa_{t,s} dt, \qquad \eta_s:=\int_{s}^1 \kappa^2_{t,s}dt, \qquad s\in [0,1].$$

We get,
\begin{align*}
&\bar \E\Big[\Big(\bar W_1+\int_0^1 u_t dt\Big)^2-\bar\beta\int_0^1 u^2_tdt\Big]\\
&\quad=
\bar \E\Big[\int_0^1 \left((1+\zeta_s)^2 -\bar\beta \eta_s\right)ds\Big]\\
&\quad\leq\bar \E\Big[\int_0^1 \Big((1+\zeta_s)^2 -\frac{\bar\beta \zeta^2_s}{1-s}\Big)ds\Big]\\
&\quad\leq  \int_0^1 \frac{\bar\beta}{\bar\beta-(1-s)}ds=\bar\beta\log\frac{\bar\beta}{\bar\beta-1}.
\end{align*}
Indeed, the first equality follows
from the stochastic Fubini theorem and the
It\^o-isometry (see Chapter IV in \cite{RY}), the first inequality follows from the
Cauchy--Schwarz inequality, the second inequality follows from maximizing the quadratic pattern (for a given $s$)
$z\rightarrow (1+z)^2-\frac{\bar\beta z^2}{1-s}$ and the last equality is a simple computation.
This together with (\ref{1}) completes the proof of (\ref{lag}).

Next, recall the process $\theta^{\beta}$ given by (\ref{theta}).
Observe that for $\kappa_{t,s}:=\theta^{\bar\beta}_s$, with $t,s\in [0,1]$ the above two inequalities are in fact equalities.
Moreover, it easy to check that
$$\bar \E\Big[\Big(\bar W_1+\int_0^1 u_t dt\Big)^2\Big]
=\int_0^1 \left(1+(1-s)\theta^{\bar\beta}_s\right)^2ds=\frac{\bar\beta}{\bar\beta-1}$$
and so for $\psi=\theta^{\bar\beta}$ we have an equality in (\ref{eq:lem}).

\textbf{Case II: $\bar \E[(\bar W_1+\int_0^1\psi_t dt)^2]<1$}.
Let $\bar\beta$ be given by (\ref{newnew}).
Observe that
$\bar \E[(\bar W_1+\int_0^1\psi_t dt)^2]=\bar\beta/(\bar\beta+1).$
 In order to prove the inequality
(\ref{eq:lem}) it sufficient to show that
\begin{align}\notag
\bar \E\Big[\Big(\bar W_1+\int_0^1\psi_t dt\Big)^2+\bar\beta\int_0^1\psi^2_tdt\Big]
&\geq
\bar\beta/(\bar\beta+1)+\bar\beta\pen\Big(\frac{\bar\beta}{\bar\beta+1}\Big)
\\\label{lag1}
&=\bar\beta\log\frac{\bar\beta+1}{\bar\beta}.
\end{align}
Let $u,v,\zeta,\eta$ defined as in Case I.
Recall the process $\vartheta^{\beta}$ given by (\ref{vartheta}).
Then, by using similar arguments as in Case I we obtain
\begin{align*}
&\bar \E\left[\left(\bar W_1+\int_0^1\psi_t dt\right)^2+\bar\beta\int_0^1\psi^2_tdt\right]\\
&\quad\geq
\bar \E\left[\left(\bar W_1+\int_0^1 u_t dt\right)^2+\bar\beta\int_0^1 u^2_tdt\right]\\
&\quad=\bar \E\Big[\int_0^1 \left((1+\zeta_s)^2 +\bar\beta \eta_s\right)ds\Big]\\
&\quad\geq\bar \E\Big[\int_0^1 \left((1+\zeta_s)^2 +\bar\beta \zeta^2_s/(1-s)\right)ds\Big]\\
&\quad\geq  \int_0^1 \frac{\bar\beta}{\bar\beta+(1-s)}ds=
\bar\beta\log\frac{\bar\beta+1}{\bar\beta}
\end{align*}
and (\ref{lag1}) follows.
Finally, we notice that for $\kappa_{t,s}:=\vartheta^{\bar\beta}_s$, $t,s\in [0,1]$ the above inequalities are in fact equalities.
In addition it is easy to check that
$$\bar \E\Big[\Big(\bar W_1+\int_0^1 u_t dt\Big)^2\Big]
=\int_0^1 \left(1+(1-s)\vartheta^{\bar\beta}_s\right)^2ds=\frac{\bar\beta}{\bar\beta+1}$$
and so for $\psi=\vartheta^{\bar\beta}$ we have an equality in (\ref{eq:lem}).\qed
\end{proof}

Next, fix $n\in\mathbb N$. The next lemma provides a bound for an expected payoff
calculated with respect to a given discrete-time martingale $M$,
which later on will stand for
$(X_{kT/n})_{0\leq k\leq n}$.
The idea is to construct a continuous-time martingale that is close in
distribution to the process $M$ on the discrete set of times and whose volatility is piecewise constant between two consecutive points on the discrete-time set.

\begin{lem}\label{lem:main2}
Let $\{M_k\}_{0\leq k\leq n}$ be a martingale defined on some probability space, with $M_0=X_0$ and that satisfies for any $k=0,\ldots, n-1$,
\begin{align}\label{asm}
 \hat\E\left[|M_{k+1}-M_k|^3\mid M_0,\ldots,M_k\right]\le h(n)
 \end{align}
where $\hat \E$ is the expectation with respect to the given probability space. Assume that for some $K>0$,
\begin{align}\label{-K}
&\hat\E\Big[f^n(M)-{\frac{1}{2n\ell}}\sum_{k=0}^{n-1}\pen\Big(\frac{n}{\sigma^2 T}
 \hat \E\left[|M_{k+1}-M_k|^2\mid M_0,\ldots,M_k\right]\Big)\Big]\ge -K
\end{align}
where
$f^n(M)=f(p^n(M))$ and with abuse of notations
$p^n(M)$ is the linear interpolation of $((kT/n,M_k):k=0,\ldots,n)$ ($p^n(M)$ is a random element in $\mathcal{C} [0,T]$).
Then, there exists a constant $C>0$ (that depends only on $\ell$, $K$, and $f$, through the Lipschitzity and the linear growth), which is independent of $n$,
such that,
\begin{align}
\notag
&\hat\E\Big[f^n(M)-{\frac{1}{2n\ell}}\sum_{k=0}^{n-1}\pen\Big(\frac{n}{\sigma^2 T}
\hat \E\left[|M_{k+1}-M_k|^2\mid M_0,\ldots,M_k\right]\Big)\Big]\\\label{1/8}
&\quad\le
 C (h(n)n)^{1/8}+\sup_{\nu\in\calV}
\E_{\mathbb P}\Big[f^n\Big(X^{(\nu)}\Big)-{\frac{1}{2\ell T}}\int_0^T\pen\Big(\frac{\nu_t}{{\sigma^2}}\Big)dt\Big].
\end{align}
\end{lem}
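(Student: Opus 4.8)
The plan is to pass from the discrete-time martingale $M$ to a continuous-time martingale $\widetilde M$ on $[0,T]$ whose volatility is constant on each grid interval $[kT/n,(k+1)T/n]$ and whose value at the grid times is close in law to $M_k$, so that the discrete penalty sum becomes exactly the continuous penalty functional $\frac{1}{2\ell T}\int_0^T \pen(\nu_t/\sigma^2)\,dt$ for an admissible $\nu\in\calV$. Concretely, on the interval $[kT/n,(k+1)T/n]$ set the (random, $\calF_{kT/n}$-measurable) variance level $\nu$ equal to $\frac{n}{T}\,\hat\E[|M_{k+1}-M_k|^2\mid M_0,\dots,M_k]$, build $\widetilde M$ as $X_0$ plus the stochastic integral of $\sqrt{\nu}$ against a Wiener process, and observe that then $\frac{1}{2n\ell}\sum_k \pen(\frac{n}{\sigma^2 T}\hat\E[|M_{k+1}-M_k|^2\mid\cdot]) = \frac{1}{2\ell T}\int_0^T \pen(\nu_t/\sigma^2)\,dt$ \emph{exactly}, by construction. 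Thus the only thing to control is the difference $\hat\E[f^n(M)]-\E_\PP[f^n(\widetilde M)]$, which by Lipschitz continuity of $f$ and hence of $f^n=f\circ p^n$ is bounded by a constant times $\E[\max_{0\le k\le n}|M_k-\widetilde M_{kT/n}|]$ (after coupling $M$ and $\widetilde M$ on a common space).

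The coupling and the estimate on $\E[\max_k|M_k-\widetilde M_{kT/n}|]$ is where the strong invariance principle enters. Here I would invoke the result of \cite{dol-2012}: given a discrete-time martingale with conditional variances $\sigma_k^2:=\hat\E[|M_{k+1}-M_k|^2\mid\cdot]$ and conditional third moments bounded by $h(n)$ (this is exactly \eqref{asm}), one can construct on a single probability space a continuous martingale with the prescribed piecewise-constant quadratic variation such that the uniform distance between the two, over the $n$ grid points, is controlled in $L^1$ (or in probability) by an explicit power of the relevant moment bounds. The natural bound coming out of such a principle is of the form $C\,(h(n)n)^{1/8}$ — the exponent $1/8$ and the appearance of $h(n)n$ (which tends to $0$ by \eqref{2.scale2}) are precisely what the statement advertises, and this is the sole place the scaling \eqref{2.scale2} is used. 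One technical point to handle: the invariance principle is typically stated under a uniform bound on increments or a moment bound that may degenerate when $\nu$ is very large; to apply it cleanly I would first use the hypothesis \eqref{-K} together with the coercivity of $\pen$ (namely $\pen(y)\to\infty$ as $y\to\infty$ and as $y\to 0^+$, and $\pen\ge 0$) and the linear growth of $f$ to derive an a priori bound showing that the conditional variances $\sigma_k^2$, suitably aggregated, cannot be too large — i.e. $\frac{1}{n}\sum_k \pen(\frac{n}{\sigma^2 T}\sigma_k^2)$ is bounded by a constant depending on $\ell,K,f$. This is what lets the constant $C$ depend only on $\ell,K,f$ and not on $n$.

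After the coupling is in place, the argument concludes quickly: write
\begin{align*}
&\hat\E\Big[f^n(M)-\tfrac{1}{2n\ell}\textstyle\sum_{k=0}^{n-1}\pen\Big(\tfrac{n}{\sigma^2 T}\hat\E[|M_{k+1}-M_k|^2\mid M_0,\dots,M_k]\Big)\Big]\\
&\qquad= \E\Big[f^n(M)-f^n(\widetilde M)\Big] + \E\Big[f^n(\widetilde M)-\tfrac{1}{2\ell T}\textstyle\int_0^T\pen\big(\tfrac{\nu_t}{\sigma^2}\big)dt\Big]\\
&\qquad\le C_f\,\E\big[\max_{0\le k\le n}|M_k-\widetilde M_{kT/n}|\big] + \sup_{\nu\in\calV}\E_\PP\Big[f^n\big(X^{(\nu)}\big)-\tfrac{1}{2\ell T}\textstyle\int_0^T\pen\big(\tfrac{\nu_t}{\sigma^2}\big)dt\Big],
\end{align*}
where the first term is bounded by $C(h(n)n)^{1/8}$ by the invariance principle and the second term is exactly the supremum on the right-hand side of \eqref{1/8} (the $\nu$ we built is bounded and predictable, hence in $\calV$, after a routine truncation/approximation if the raw $\nu$ is only measurable). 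The main obstacle, and the real content, is the quantitative strong approximation step: producing the coupling with the $(h(n)n)^{1/8}$ rate, uniformly in $n$, with constants depending only on the allowed data — this is precisely why a strong invariance principle rather than a weak-convergence argument is needed, since tightness of the consistent price systems is unavailable.
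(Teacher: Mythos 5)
Your outline does follow the paper's strategy (the strong invariance principle of \cite{dol-2012}, a piecewise-constant $\nu$ matching the discrete penalty exactly, and a Lipschitz estimate for $f^n$), but there is a genuine gap at the step where you replace $\E[f^n(\widetilde M)-\frac{1}{2\ell T}\int_0^T\pen(\nu_t/\sigma^2)dt]$ by $\sup_{\nu\in\calV}\E_{\mathbb P}[\cdots]$. The class $\calV$ consists of processes that are predictable with respect to the Brownian filtration $(\calF^W_t)$ on the original space, and $X^{(\nu)}$ must be driven by that $W$. The $\nu$ you construct lives on the coupling space and, on $[kT/n,(k+1)T/n)$, is $\sigma(M_0,\ldots,M_k)$-measurable; this $\sigma$-field is \emph{not} contained in the one generated by the Gaussian innovations $Y_1,\ldots,Y_k$ driving your $\widetilde M$, so $\widetilde M$ is not of the form $X_0+\int\sqrt{\nu}\,dW$ with $\nu$ adapted to the driver's own filtration, and no ``routine truncation/approximation'' cures this adaptedness (weak-versus-strong) problem. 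The paper devotes a specific construction to exactly this point: via Skorokhod's representation it builds measurable maps $\chi_k$ regenerating the conditional law of $M_{k+1}$ given the past and the new increment, takes $\bar Y_{k+1}$ to be the normalized Brownian increment, and extracts an \emph{additional} independent standard normal $\bar\xi_{k+1}$ from the Brownian path strictly inside the interval; this yields $\bar M$ adapted to $(\calF^W_{kT/n})_k$ and hence a genuinely $(\calF^W_t)$-predictable $\nu\in\calV$ with $(X^{(\nu)}_{kT/n})_k$ equal in law to the coupled process $(\hat X_k)_k$. Without this step (or a separate argument that the supremum over $\calV$ is invariant under weak formulations), your comparison with $\sup_{\nu\in\calV}$ is unjustified.

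A secondary, fixable imprecision: the invariance principle gives a bound \emph{in probability}, $\hat{\mathbb P}[\max_k|M_k-\hat X_k|>(h(n)n)^{1/4}]<\bar C(h(n)n)^{1/4}$, not an $L^1$ bound on the maximum, and the exponent $1/8$ does not ``come out of the principle''; it arises from a Cauchy--Schwarz estimate of $f^n(M)$ on the bad event against the second-moment bound $\hat\E[\max_k M_k^2]\le\hat C$. It is precisely there --- not in verifying hypotheses of the invariance principle --- that assumption \eqref{-K} enters (combined with $\pen(y)\ge y/2-1$, Doob's inequality and the linear growth of $f$), which is what makes $C$ depend only on $\ell$, $K$ and $f$. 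Note also that the conditional variances are automatically bounded by $h(n)^{2/3}$ via Jensen's inequality from \eqref{asm}, so boundedness of $\nu$ is never the issue; the hypothesis of \cite{dol-2012} needed is just the conditional third-moment bound \eqref{asm}. As written, your proposal misattributes the role of \eqref{-K} and asserts an $L^1$ coupling rate that the cited result does not directly provide.
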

\begin{proof}
The Lipschitz continuity of $f$ implies that it has a linear growth. This together
with the Doob inequality for the martingale $M$, the simple
bound $\pen(y)\ge y/2-1$ and \eqref{-K} gives that
there exists a constant $\hat C>0$  (that depends only on $\ell$, $K$, and $f$, through the Lipschitzity and the linear growth), which is independent of $n$,
such that,
\begin{align}\label{C}
\hat\E\left[\max_{0\leq k\leq n}M^2_k\right]\le \hat C.
\end{align}
From Lemma 3.2 in \cite{dol-2012} and (\ref{asm}) it follows that we can
construct the martingale
$\{M_k\}_{0\leq k\leq n}$ on a new probability space
(namely the joint distribution of $(M_0,...,M_n)$ is the same as before)
which supports a sequence of identically distributed random variables $\{Y_k\}_{1\leq k\leq n}$, each has the standard normal distribution,
such that the following holds. \\
(I) For each $k=0,1,...,n-1$, $Y_{k+1}$ is independent of $\{M_i\}_{0\leq i\leq k}$ and $\{Y_i\}_{1\leq i\leq k}$.\\
(II) There exists a universal constant $\bar C>0$, which is independent of the parameters in the model for which
\begin{align}\label{MX}
\hat{\mathbb P}\left[\max_{k=0,\ldots,n}|M_k-\hat X_k|>(h(n)n)^{1/4}\right]<\bar C(h(n)n)^{1/4},
\end{align}
with
\begin{align*}
\hat X_k:=X_0+\sum_{i=0}^{k-1}Y_{i+1}\sqrt{\hat{\mathbb E}[(M_{i+1}-M_i)^2\mid M_0,\ldots,M_i]},\qquad k=0,\ldots,n.
\end{align*}
We abuse notation and use $\hat{\mathbb P}$ for the probability measure under the original space and the new space and
keep using the notation $M$ for the martingale under the new probability space.

Let us remark that in the formulation
of Lemma 3.2 in \cite{dol-2012} we have that
$\{Y_k\}_{1\leq k\leq n}$
are independent and identically distributed random variables
with the standard normal distribution such that for any $k$, $Y_{k+1}$ is independent of $\{M_i\}_{0\leq i\leq k}$.
However, the construction in the proof of this Lemma (Lemma 3.2 in \cite{dol-2012}) provides a stronger property which is property (I)
above.

Next, we use this representation to embed the law of $\{\hat X_k\}_{0\leq k\leq n}$ into the original Brownian probability space from Section \ref{sec:2}:
$(\Omega, \mathcal{F}, \{\mathcal{F}^W_t\}_{t \in [0,T]},\mathbb P)$.
By applying Theorem 1 in \cite{skorohod} we obtain that
there exist
measurable functions $\chi_k:\R^{2k+1}\to\R$, $k=1,...,n$, such that
for any $k=0,1,...,n-1$,
\begin{align}\notag
&\calL(Y_1,\ldots,Y_{k+1},M_0,\ldots,M_k,M_{k+1})\\\notag
&\quad=\calL\left(Y_1,\ldots,Y_{k+1},M_0,\ldots,M_k,\chi_{k+1}(Y_1,\ldots,Y_{k+1},M_0,\ldots,M_k,\xi)\right)
\end{align}
where $\xi$
has the standard normal distribution and is independent of $\{Y_i\}_{1\leq i\leq k+1}$ and $\{M_i\}_{0\leq i\leq k}$.

Define on the probability space $(\Omega, \mathcal{F}, \{\mathcal{F}^W_t\}_{t \in [0,T]},\mathbb P)$
the three processes $\{\bar Y_i\}_{1\leq i\leq n}$, $\{\bar \xi_i\}_{1\leq i\leq n}$
and $\{\bar M_i\}_{0\leq i\leq n}$ as follows.
First,
for any $k=1,...,n$ set
\begin{align*}
&\bar Y_{k}:=\sqrt{n/T}\big(W_{k T/n}- W_{(k-1)T/n}\big),\\
&\bar \xi_{k}:=\frac{3W_{(k-1)T/n+T/(3n)}-2W_{(k-1)T/n+T/(2n)}-W_{(k-1)T/n}}{\sqrt{\text{Var}\big( 3W_{(k-1)T/n+T/(3n)}-2W_{(k-1)T/n+T/(2n)}-W_{(k-1)T/n}\big)}}.
\end{align*}
Next, define by recursion $\bar M_0:=X_0$ and for any $k=0,1,...,n-1$,
$$\bar M_{k+1}:=\chi_{k+1}(\bar Y_1,\ldots,\bar Y_{k+1},\bar M_0,\ldots,\bar M_k,\bar\xi_{k+1}).$$
Clearly, $\{\bar Y_i\}_{1\leq i\leq n}$ and $\{\bar \xi_i\}_{1\leq i\leq n}$
have the standard normal distribution. Observe that for any $k$, $\bar Y_{k+1}$ and $\bar\xi_{k+1}$ are independent of
$W_{[0,kT/n]}$ and so, they are independent of $\{\bar Y_i\}_{1\leq i\leq k}$ and $\{\bar M_i\}_{0\leq i\leq k}$. Moreover, we notice that for any $k$,
$\bar \xi_{k}$ is independent of $\bar Y_{k}$ (they are bivariate normal and uncorrelated). Thus for any $k$,
$\bar \xi_{k+1}$ is independent of $\{\bar Y_i\}_{1\leq i\leq k+1}$ and $\{\bar M_i\}_{0\leq i\leq k}$.
From the definition of the functions
$\chi_k$, $k=1,...,n$, we conclude (by induction) that
\begin{align}\label{distri}
\calL\left(\{\bar M_i\}_{0\leq i\leq n},
\{\bar Y_i\}_{1\leq i\leq n}\right)=\calL\left(\{ M_i\}_{0\leq i\leq n},
\{Y_i\}_{1\leq i\leq n}\right).
\end{align}

Next, let
$\ph_k:\R^{k+1}\to\R_{+}$, $k=0,1,...,n-1$, be measurable functions
such that
\begin{align}\notag
\sqrt{\E[(M_{k+1}-M_k)^2\mid M_0,\ldots,M_k]}=\ph_k(M_0,\ldots,M_k), \ \ \ k=0,1,...,n-1.
\end{align}
Introduce the process $\nu\in \calV$ by
$$\nu_t:=\frac{n}{T}\sum_{k=0}^{n-1}\ph^2_k(\bar M_0,\ldots,\bar M_k)\one_{\{kT/n\leq t< (k+1)T/n\}},\qquad t\in[0,T].$$
From (\ref{distri}) it follows that the law of
$(\nu_{kT/n}:k=0,\ldots,n-1)$ equals to the law of $\left(\frac{n}{T}\E\left[\left(M_{k+1}-M_k\right)^2\mid M_0,\ldots,M_k\right]:k=0,\ldots,n-1\right).$
Since $\nu$ is constant on each of the intervals $[kT/n,(k+1)T/n]$, $k=0,\ldots,n-1$,
we conclude that
\begin{align}\notag
&\hat\E\Big[\frac{1}{n}\sum_{k=0}^{n-1}\pen\Big(\frac{n}{\sigma^2 T}
\hat \E\left[|M_{k+1}-M_k|^2\mid M_0,\ldots,M_k\right]\Big)\Big]\\\label{g}
&\qquad
=\mathbb E_{\mathbb P}\Big[\frac{1}{T}\int_0^T\pen\Big(\frac{\nu_t}{\sigma^2}\Big)dt\Big].
\end{align}
Finally, consider the process
$X^{(\nu)}$. Observe that
$$X^{(\nu)}_{kT/n}=X_0+\sum_{i=0}^{k-1}\bar Y_{i+1}\ph_i(\bar M_0,\ldots,\bar M_i), \ \ \ k=0,1,...,n.$$
This together with (\ref{distri}) yields that
$(X^{(\nu)}_{kT/n})_{0\le k\le n}$ and $(\hat X_k)_{0\le k\le n}$ have the same distribution.
Therefore, from the fact that $f\geq 0$ and Lipschitz continuous we obtain that there exists a constant
$c_1$, which does not depend on $n$, such that
\begin{align}
\notag
\hat\E\Big[f^n(M)\Big]
&\leq \mathbb E_{\mathbb P}\Big[f^n(X^{(\nu)})\Big]+c_1 (h(n)n)^{1/4}\\\notag
&\quad+\hat\E\Big[f^n(M)\one_{\left\{\max_{k=0,\ldots,n}|M_k-\hat X_k|>(h(n)n)^{1/4}\right\}}\Big]\\\label{g1}
&\leq
\mathbb E_{\mathbb P}\Big[f^n(X^{(\nu)})\Big]+C(h(n)n)^{1/8}
\end{align}
for some constant $C$
which does not depend on $n$.
The last inequality
follows from the
Cauchy--Schwarz inequality,
the linear growth of $f$, the scaling assumption (\ref{2.scale2})
and (\ref{C})--(\ref{MX}).

By combining (\ref{g})--(\ref{g1}) we complete the proof of \eqref{1/8}.
\qed
\end{proof}

We are now ready to prove the upper bound.
\begin{proof}[Proof of the inequality
`$\leq$' in \eqref{2.33}.]
Fix $\ell>0$. By passing to a subsequence (which is still denoted by $n$) we assume without loss of generality assume that
\begin{align}\label{-infty}
\lim_{n\rightarrow\infty}
\mathbb E_{\Q^n}\left[f^n(X)-\frac{1}{2n\ell}\int_0^T|\psi^{\Q^n}_t|^2 dt\right]>-\iy.
\end{align}
Otherwise the statement is obvious.
Fix $n$ and introduce the $\Q^n$-martingale
$M_k:=X_{kT/n}$, $k=0,1,...,n$
where, recall
that
\begin{align}\notag
X_t=X_0+\sigma W^{\Q^n}_t+\sigma\int_{0}^t \psi^{\Q^n}_s ds+\mu t,\qquad t\in[0,T],
\end{align}
where $W^{\Q^n}$ is a Wiener process under $\Q^n$.
Observe that
for any $0\le k\le n-1$, we have
$$\mathbb E_{\Q^n}\left[\int_{kT/n}^{(k+1)T/n}\left(\psi^{\Q^n}_t+\mu/\sigma\right) dt\;\Big|\;\mathcal F_{kT/n}\right]=0.$$
This together with
Lemma \ref{lem:main1} and the scaling property of Brownian motion
gives
\begin{align*}\notag
&\E_{\Q^n}\left[\int_0^T|\psi^{\Q^n}_t|^2dt\right]\\
&\quad=
\E_{\Q^n}\left[\sum_{k=0}^{n-1}\mathbb E_{\Q^n}\left[\int_{kT/n}^{(k+1)T/n}|\psi^{\Q^n}_t|^2 dt\;\Big|\;\mathcal F_{kT/n}\right]\right]\\
&\quad=\E_{\Q^n}
\left[\sum_{k=0}^{n-1}\mathbb E_{\Q^n}\left[\int_{kT/n}^{(k+1)T/n}\left(\psi^{\Q^n}_t+\mu/\sigma\right)^2 dt\;\Big|\;\mathcal F_{kT/n}\right]\right]-\mu^2 T/\sigma^2\nonumber\\
&\quad\geq\E_{\Q^n}
\left[\sum_{k=0}^{n-1}\pen\left(\frac{n}{\sigma^2 T}\E_{\Q^n}\left[|M_{k+1}-M_k|^2\;\Big|\;\mathcal F_{kT/n}\right]\right)\right]-\mu^2 T/\sigma^2\\
&\quad\geq\E_{\Q^n}
\left[\sum_{k=0}^{n-1}\pen\left(\frac{n}{\sigma^2 T}
\E_{\Q^n}\left[|M_{k+1}-M_k|^2\;\Big|\;M_0,...,M_k\right]\right)\right]-
\mu^2 T/\sigma^2
\end{align*}
where the last inequality follows from the Jensen inequality for the convex function
$\pen(\cdot)$,

Finally, from the assumption 
\eqref{-infty},
it follows that we can apply
Lemma \ref{lem:main2} (i.e. (\ref{-K}) holds true for some constant $K$). We conclude
\begin{align*}\notag
&\mathbb E_{\Q^n}\left[f^n(X)-\frac{1}{2n\ell}\int_0^T|\psi^{\Q^n}_t|^2dt\right]\\
&\quad\leq C (h(n)n)^{1/8}+\mu^2 T/(2\ell\sigma^2 n)+\sup_{\nu\in\calV}
\E_{\mathbb P}\Big[f^n\Big(X^{(\nu)}\Big)-{\frac{1}{2\ell T}}\int_0^T\pen\Big(\frac{\nu_t}{{\sigma^2}}\Big)dt\Big].
\end{align*}
The proof is completed by (\ref{2.scale2})
and taking $n\rightarrow\infty$.\qed
\end{proof}

\subsection{Lower bound}\label{sec:42}

This section is devoted to the proof of the inequality
`$\geq$' in \eqref{2.33}.

\begin{proof}[Proof of the inequality
`$\geq$' in \eqref{2.33}.]

Fix $\ell>0$.
The proof is done in three steps. In the first step we construct a sequence of controls
on the Brownian probability space which asymptotically achieves the supremum on the right-hand side of
(\ref{2.33}) and have a simple structure. In the second step we
apply the processes $\theta^{\beta},\vartheta^{\beta}$ from Lemma \ref{lem:main1} in order to construct a sequence
of probability measures
$\Q^n\in\calQ^n$ together with their Girsanov's kernels $\psi^{\Q^n}$. The difficulty in this step stems from the fact that the processes $\theta^\beta$ and $\vartheta^\beta$ are constructed via the process $\bar W$, which in our case translates to $W^{\Q^n}$. Note that the measure $\Q^n$ is detemined by the Girsanov's kernel $\psi^{\Q^n}$. To overcome this technical difficulty, we use integration by parts and introduce a path-dependent SDE. As a by product, our process $\psi^{\Q^n}$ is measurable with respect to the original filtration $\calF^W$.
Finally, we show convergence of the payoff components.

{\it Step 1:}
For any $K>0$ and $n\in\N$, let $\calV^n_K\subset\calV$ be the set of all volatility processes
of the form
\begin{align}\label{form}
\nu_t=\sum_{k=0}^{n-1} \phi_k\left(W_0,W_{T/n},...,W_{kT/n}\right)\one_{t\in [kT/n, (k+1)T/n)}
\end{align}
where $\phi_k:\mathbb R^{k+1}\rightarrow [1/K,K]$, $k=0,1,....,n-1$, are continuous functions.

Set $\epsilon>0$. In this step we argue that
there exist $K=K(\epsilon)$ and $N=N(\epsilon)$ such that for any
$n>N$
\begin{align}\notag
&\sup_{\nu\in\calV}\E_{\mathbb P}\left[f\left(X^{(\nu)}\right)- \frac{1}{2\ell{T}}\int_{0}^T \pen\left(\frac{\nu_t}{\sigma^2}\right)dt \right]\\\label{lowerbound1}
&\quad<\epsilon+\sup_{\nu\in\calV^n_K}\E_{\mathbb P}\left[f^n\left(X^{(\nu)}\right)- \frac{1}{2\ell{T}}\int_{0}^T \pen\left(\frac{\nu_t}{\sigma^2}\right)dt \right].
\end{align}
To this end, observe first that, by standard density arguments, we will get the same supremum on the
right-hand side of (\ref{2.33})
 if instead of letting $\nu$ vary over all of $\calV$ there, we confine it to be of the form
\begin{align*}
\nu_t:= \sum_{j=0}^{J-1} \phi_j\left(W_{t_0},\dots,W_{t_{j}}\right) \one_{t\in [t_{j},t_{j+1})},  \quad t\in [0,T],
  \end{align*}
where $0=t_0<t_1<\dots<t_J=T$ is a finite deterministic partition of $[0,T]$
and each $\phi_j:\mathbb R^{j+1}\rightarrow\mathbb R_{+}$, $j=0,...,J-1$, is continuous,
bounded and bounded away from zero.

Let $\nu$ be of the above form. There exists $K$ such that $\nu\in [1/K,K]$ a.s. For any $n\in\mathbb N$
set
$$t^n_j:=\min\left\{t\in  \{0,T/n,2 T/n,...,T\}: t\geq t_j\right\}, \ \ j=0,1,...,J$$
and define $\nu^n\in\calV^n_K$ by
\begin{align*}\notag
\nu^n_t:= \sum_{j=0}^{J-1} \phi_j\left(W_{t^n_0},\dots,W_{t^n_{j}}\right) \one_{t\in [t^n_{j},t^n_{j+1})},  \quad t\in [0,T].
  \end{align*}
 Observe that
$\nu_n\rightarrow\nu$ in $L^2(dt\otimes\mathbb P)$. This together with the Lipschitz continuity of $f$ and
the Lipschitz continuity of $\pen$ on the interval $[1/(K\sigma^2),K/\sigma^2]$ gives that
\begin{align*}\notag
&\E_{\mathbb P}\left[f\left(X^{(\nu)}\right)- \frac{1}{2\ell{T}}\int_{0}^T \pen\left(\frac{\nu_t}{\sigma^2}\right)dt \right]\\
&\quad=\lim_{n\rightarrow\infty}\E_{\mathbb P}\left[f\left(X^{(\nu^n)}\right)- \frac{1}{2\ell{T}}\int_{0}^T \pen\left(\frac{\nu^n_t}{\sigma^2}\right)dt \right].
\end{align*}
 Thus, in order to establish (\ref{lowerbound1}) it remains to show that
 $$\lim_{n\rightarrow\infty}\E_{\mathbb P}\left[f^n\left(X^{(\nu^n)}\right)-f\left(X^{(\nu^n)}\right)\right]=0.$$
Indeed, from the Lipschitz continuity of $f$, the Burkholder--Davis--Gundy inequality and the fact that
$\nu^n\leq K$ for all $n$,
 it follows that there exists constants $\tilde C_1,\tilde C_2$ (independent of $n$) such that
 \begin{align*}\notag
&\E_{\mathbb P}\left[\left(f^n\left(X^{(\nu^n)}\right)-f\left(X^{(\nu^n)}\right)\right)^4\right]\\
&\quad\leq
\tilde C_1\E_{\mathbb P}\left[\max_{0\leq k\leq n-1}\sup_{kT/n\leq t\leq (k+1)T/n}\left(X^{(\nu^n)}_t-
X^{(\nu^n)}_{kT/n}\right)^4\right]\\
&\quad\leq \tilde C_1\sum_{k=0}^{n-1}\E_{\mathbb P}\left[\sup_{kT/n\leq t\leq (k+1)T/n}\left(X^{(\nu^n)}_t-
X^{(\nu^n)}_{kT/n}\right)^4\right]\\
&\quad\leq \tilde C_1 n \tilde C_2 n^{-2}\\
&\quad=\tilde C_1\tilde C_2/n.
\end{align*}
This completes the proof of (\ref{lowerbound1}).

{\it Step 2:}
Fix $K,A>0$ and choose $n\in\mathbb N$. Following (\ref{newnewnew})--(\ref{newnew}) we define the functions
$\beta:[1/K,K]\rightarrow \mathbb R_{+}$ and $\theta:[1/K,K]\times [0,T/n]\rightarrow\mathbb R$ by
\begin{align}\notag
\beta(u)&:=\frac{u/\sigma^2}{|u/\sigma^2-1|}\one_{\{u\neq\sigma^2\}},\\\notag
\theta(u,t)&:=\Big(\beta(u)T/n-\Big(T/n-t\Big)\Big)^{-1}\one_{\{u>\sigma^2\}}\\\label{def1}
&\quad-\Big(\beta(u)T/n+\Big(T/n-t\Big)\Big)^{-1}\one_{\{u<\sigma^2\}}.
\end{align}
Introduce the map
$\Phi^A:[1/K,K]\times \mathcal C[0,T/n]\rightarrow \mathcal C[0,T/n]$,
such that for any $u\in [1/K,K]$, $z\in \mathcal C[0,T/n]$, and $t\in [0,T/n]$,
\begin{align}\label{def}
\Phi^A_t(u,z):=(-A)\vee\left(\theta(u,t)z_t-\theta(u,0)z_0-\int_{0}^{t}z_s \frac{\partial\theta(u,s)}{\partial s}ds\right)\wedge A.
\end{align}
Observe that $\Phi^A_t(u,z)$ depends only on $z_{[0,t]}$.
For a given $u\in [1/K,K]$ and $y\in\mathbb R$ consider the path-dependent SDE
\begin{align}\label{SDENEW}
dY_t=dW_t-(\Phi^A_t(u,Y)-\mu/\sigma)dt, \ \ \ t\in [0,T/n], \ \ Y_0=y.
\end{align}
Let us notice that (for a given $u$) $\Phi^A(u,\cdot):\mathcal C[0,T/n]\rightarrow \mathcal C[0,T/n]$
is a Lipschitz continuous function with respect to the sup-norm. Thus,
Theorem 2.1 from Chapter 9 IX in \cite{RY} yields a unique strong solution for the above SDE. Obviously,
(\ref{SDENEW}) can be reformulated by
\begin{align*}\notag
d(U_t,Y_t)=(0,dW_t)-(0,\Phi^A_t(U_t,Y)-\mu/\sigma)dt, \ \ t\in [0,T/n], \  U_0=u, \  Y_0=y.
\end{align*}
The last formulation allows us to apply
Theorem 1 in \cite{K:1996} and we obtain the existence of a \textit{jointly} measurable function
$$\Psi^A:[1/K,K]\times \mathbb R\times\mathcal C[0,T/n]\rightarrow \mathcal C[0,T/n]$$
such that for any $u>0$ and $y\in\mathbb R$,
$Y_{[0,T/n]}:=\Psi^A(u,y,W_{[0,T/n]})$
is the unique strong solution to (\ref{SDENEW}).

Next, let $\nu\in\calV^n_K$ be given by (\ref{form}).
Define
inductively the random variables $u_k^{A,n,\nu}=u_k$ and
$Y^{A,n,\nu}_{[kT/n,(k+1)T/n]}=Y^{A}_{[kT/n,(k+1)T/n]}$, $k=0,1,...,n-1$, as follows.
Set $u_0:=\nu_0$, $Y^{A}_{[0,T/n]}:=\Psi^A(u_0,0,W_{[0,T/n]})$,
and  for $k=1,...n-1$,
\begin{align*}\notag
u_k&:=\phi_k\left(Y^{A}_0,...,Y^{A}_{kT/n}\right),\\
Y^{A}_{[kT/n,(k+1)T/n]}&:=S^k\left(\Psi^A\left(u_k,Y^{A}_{kT/n},\{W_{t+kT/n}-W_{kT/n}\}_{t\in [0,T/n]}\right)\right),
\end{align*}
  where $S_k:\mathcal C[0,T/n]\rightarrow \mathcal C[kT/n, (k+1)T/n]$ is the shift operator (bijection) given by
  $(S_k(z))_t:=z_{t-kT/n}$.

Recall the first paragraph of Section \ref{sec:42}. We now use the process $Y^A$ in order to generate {\it at the same time} a measure $\Q^A$ and its Girsanov's kernel $\psi^{\Q^A}$.
Observe that $(Y^{A}_{t})_{t\in [0,T]}$ satisfies the equation
\begin{align}\label{important}
Y^{A}_t=W_t+\frac{\mu t}{\sigma}-\sum_{k=0}^{n-1}\int_{t\wedge (kT/n)}^{t\wedge ((k+1)T/n)}\Phi^A_t\left(u_k,S^{-1}_k\left(Y^{A}_{[kT/n,(k+1)T/n]}\right)\right)dt.
\end{align}
Since $\Phi^A$ is a bounded function, then from the Girsanov's theorem we obtain that there exists
a probability measure $\mathbb Q^{A,n,\nu}=\mathbb Q^A\sim\mathbb P$ with
with finite entropy
$\E_{\mathbb Q^A}[\log(d\mathbb Q^A/d\mathbb P)]<\infty$
such that
$W^{\mathbb Q^A}:=Y^{A}$ is a Wiener process under $\mathbb Q^A$.

From (\ref{def}), (\ref{important}) and the integration by parts formula it follows that
\begin{align}\notag
&\psi^{\mathbb Q^A}_t=
(-A)\vee \left(\int_{kT/n}^t
\theta\left(\phi_k(W^{\mathbb Q^A}_0,...,W^{\mathbb Q^A}_{kT/n}),s-kT/n\right)dW^{\mathbb Q^A}_s\right)\wedge A-\frac{\mu}{\sigma}\\\label{drift}
&\mbox{for}\ \ t\in [kT/n,(k+1)T/n), \ \ k=0,1,..,n-1.
\end{align}
We end this step with arguing that there exists $N=N(K)$ such that for any $n>N(K)$ we have
$\mathbb Q^A\in\mathcal Q^n$.
First, we establish the martingale property.
Indeed, from (\ref{drift}) it follows that for any $0\le k\le n-1$, and
$t\in [kT/n,(k+1)T/n)$, the conditional distribution (under $\mathbb Q^A$) of $\psi^{\mathbb Q^A}_t$,
given $\mathcal F^W_{kT/n}$, is symmetric around $-\mu/\sigma$, and so
$(X_{kT/n})_{0\leq k\leq n}$ is $\mathbb Q^A$-martingale.

Finally, we establish the super-martingale property. Clearly, there exists a constant $\bar{c}>0$
which depends on $K$ such that $|\theta(u,t)|\leq \bar{c}$ for all
$u\in [1/K,K]$ and $t\in [0,T/n]$. This together with (\ref{2.scale1}) and (\ref{drift}) gives that there exists a parameter $N=N(K)$
such that for any $n>N(K)$
$$\mathbb E_{\mathbb Q^A}\left[|X_{(k+1)T/n}-X_{kT/n}|^3\;\Big|\;\mathcal F^W_{kT/n}\right]\leq h(n), \quad k=0,1,...,n-1,$$
as required.

{\it Step 3:} In this step we fix arbitrary $n>N(K)$ and $\nu\in\calV^K_n$. Then in view of
(\ref{lowerbound1}), in order to complete the proof it remains to show that
\begin{align}\notag
&
 \sup_{\mathbb Q\in\calQ^n}\mathbb E_{\mathbb Q}\left[f^n(X)-\frac{1}{2\lambda}\int_0^T|\psi^{\mathbb Q}_s|^2ds\right]\\\label{final}
 &\quad\ge
 \E_{\mathbb P}\left[f^n\left(X^{(\nu)}\right)- \frac{1}{2\ell{T}}\int_{0}^T \pen\left(\frac{\nu_t}{\sigma^2}\right)dt \right]-
\frac{\mu^2 T}{2n\ell\sigma^2}.
\end{align}
By definition,
\begin{align}
\notag
&
 \sup_{\mathbb Q\in\calQ^n}\mathbb E_{\mathbb Q}\left[f^n(X)-\frac{1}{2\lambda}\int_0^T|\psi^{\mathbb Q}_s|^2ds\right]\\\label{final1}
 &\quad\ge
 \liminf_{A\rightarrow\infty}\;\mathbb E_{\mathbb Q^A}\left[f^n(X)-\frac{1}{2n\ell}\int_0^T\big|\psi^{\mathbb Q^A}_t\big|^2 dt\right].
\end{align}
From (\ref{def1}), (\ref{drift}), and Lemma \ref{lem:main1}
it follows that for any $A>0$ and $0\le k\le n-1$,
\begin{align*}\notag
&\mathbb E_{\mathbb Q^A}\left[\int_{kT/n}^{(k+1)T/n}\big|\psi^{\mathbb Q^A}_t\big|^2 dt\;\Big|\;\mathcal F^W_{kT/n}\right]\leq\frac{\mu^2 T}{\sigma^2 n}+\pen\left(\frac{\phi_k\left(W^{\mathbb Q^A}_0,...,W^{\mathbb Q^A}_{kT/n}\right)}{\sigma^2}\right).
\end{align*}
Thus,
\begin{align}\label{final2}
\mathbb E_{\mathbb Q^A}\left[\int_{0}^T \big|\psi^{\mathbb Q^A}_t\big|^2 dt\right]\leq
\mu^2 T/\sigma^2+\frac{n}{T}\mathbb E_{\mathbb P}\left[\int_{0}^T \pen\left(\frac{\nu_t}{\sigma^2}\right)dt\right].
 \end{align}
Finally, from (\ref{2.bac}) and
(\ref{drift}) we obtain
\begin{align}\label{referee}
\Q^A\circ \left(X_0,X_{T/n},...,X_T\right)^{-1}\To \mathbb P\circ \left(Y_0,Y_1,...,Y_n\right)^{-1} \quad \mbox{as} \quad A\rightarrow\infty
\end{align}
where
$Y_0:=X_0$ and for any $k=0,1,...,n-1$
\begin{align*}\notag
&Y_{k+1}-Y_k:=\sigma\left(W_{(k+1)T/n}-W_{kT/n}\right)\\
&+
\sigma\int_{kT/n}^{(k+1)T/n}\left(\int_{kT/n}^{t}
\theta\left(\phi_k(W_0,...,W_{kT/n}),s-kT/n\right)dW_s\right)dt\\
&=\sigma\int_{kT/n}^{(k+1)T/n}\left(1+\left((k+1)T/n-s\right)\theta\left(\phi_k(W_0,...,W_{kT/n}),s-kT/n\right)\right)dW_s
\end{align*}
where the last equality follows from
the Fubini theorem.
From the It\^{o} Isometry and (\ref{def1}) it follows that for any $k$
$$\mathbb E_{\mathbb P}\left[(Y_{k+1}-Y_k)^2|Y_0,...,Y_k\right]
=\frac{T}{n}\phi_k(W_0,...,W_{kT/n}).$$
We conclude
that
$$\mathbb P\circ \left(Y_0,Y_1,...,Y_n\right)^{-1}=\mathbb P\circ \left(X^{\nu}_0,X^{\nu}_\frac{T}{n},...,X^{\nu}_T\right)^{-1},$$
and so, from (\ref{referee}) and the Fatou's Lemma
$$
\liminf_{A\rightarrow\infty}\;\mathbb E_{\mathbb Q^A}\left[f^n(X)\right]\geq \E_{\mathbb P}\left[f^n\left(X^{(\nu)}\right)\right].
$$
This together with
(\ref{final1})--(\ref{final2}) completes the proof
of (\ref{final}).\qed
\end{proof}

$\\$\noindent
{\bf Acknowledgements.}
The authors thank the anonymous AE and reviewer for their valuable reports and comments which helped to improve the quality of this paper.

\footnotesize
\bibliographystyle{abbrv}       
\bibliography{refs}

\begin{thebibliography}{10}

\bibitem{BD:20}
P.~Bank and Y.~Dolinsky.
\newblock A note on utility indifference pricing with delayed information.
\newblock {\em SIAM Journal on Financial Mathematics}, 12:SC--31--SC--43, 2021.

\bibitem{BS:98}
G.~Barles and H.~M. Soner.
\newblock Option pricing with transaction costs and a nonlinear
  {B}lack-{S}choles equation.
\newblock {\em Finance Stoch.}, 2(4):369--397, 1998.

\bibitem{BKL:00}
D.~Bertsimas, L.~Kogan, and A.~W. Lo.
\newblock When is time continuous? [{J}. {F}inancial {E}con. {\bf 55} (2000),
  no. 2, 173--204].
\newblock pages 71--102, 2001.

\bibitem{CFRT:16}
J.~Cai, M.~Fukasawa, M.~Rosenbaum, and P.~Tankov.
\newblock Optimal discretization of hedging strategies with directional views.
\newblock {\em SIAM J. Financial Math.}, 7(1):34--69, 2016.

\bibitem{R:08}
R.~Carmona.
\newblock {\em Indifference pricing: theory and applications}.
\newblock Princeton University Press, series in Financial Engineering, 2012.

\bibitem{DS94}
F.~Delbaen and W.~Schachermayer.
\newblock A general version of the fundamental theorem of asset pricing.
\newblock {\em Mathematische Annalen}, 300:463--520, 1994.

\bibitem{dol-2012}
Y.~Dolinsky.
\newblock Numerical schemes for {$G$}-expectations.
\newblock {\em Electron. J. Probab.}, 17:no. 98, 15, 2012.

\bibitem{Fleming2006b}
W.~H. Fleming.
\newblock Risk sensitive stochastic control and differential games.
\newblock {\em Commun. Inf. Syst.}, 6(3):161--177, 2006.

\bibitem{FollmerSchied:16}
H.~F\"{o}llmer and A.~Schied.
\newblock {\em Stochastic finance}.
\newblock De Gruyter Graduate. De Gruyter, Berlin, 2016.
\newblock An introduction in discrete time, Fourth revised and extended edition
  of [ MR1925197].

\bibitem{Fritelli:00}
M.~Frittelli.
\newblock The minimal entropy martingale measure and the valuation problem in
  incomplete markets.
\newblock {\em Math. Finance}, 10(1):39--52, 2000.

\bibitem{F:11}
M.~Fukasawa.
\newblock Discretization error of stochastic integrals.
\newblock {\em Ann. Appl. Probab.}, 21(4):1436--1465, 2011.

\bibitem{F:14}
M.~Fukasawa.
\newblock Efficient discretization of stochastic integrals.
\newblock {\em Finance Stoch.}, 18(1):175--208, 2014.

\bibitem{GHLT:14}
A.~Galichon, P.~Henry-Labord\`ere, and N.~Touzi.
\newblock A stochastic control approach to no-arbitrage bounds given marginals,
  with an application to lookback options.
\newblock {\em Ann. Appl. Probab.}, 24(1):312--336, 2014.

\bibitem{G:02}
S.~Geiss.
\newblock Quantitative approximation of certain stochastic integrals.
\newblock {\em Stoch. Stoch. Rep.}, 73(3-4):241--270, 2002.

\bibitem{G:05}
S.~Geiss.
\newblock Weighted {BMO} and discrete time hedging within the {B}lack-{S}choles
  model.
\newblock {\em Probab. Theory Related Fields}, 132(1):13--38, 2005.

\bibitem{GT:09}
S.~Geiss and A.~Toivola.
\newblock Weak convergence of error processes in discretizations of stochastic
  integrals and {B}esov spaces.
\newblock {\em Bernoulli}, 15(4):925--954, 2009.

\bibitem{GM:12}
E.~Gobet and A.~Makhlouf.
\newblock The tracking error rate of the delta-gamma hedging strategy.
\newblock {\em Math. Finance}, 22(2):277--309, 2012.

\bibitem{GT:01}
E.~Gobet and E.~Temam.
\newblock Discrete time hedging errors for options with irregular payoffs.
\newblock {\em Finance Stoch.}, 5(3):357--367, 2001.

\bibitem{HM:05}
T.~Hayashi and P.~A. Mykland.
\newblock Evaluating hedging errors: an asymptotic approach.
\newblock {\em Math. Finance}, 15(2):309--343, 2005.

\bibitem{hernandez2008relation}
D.~Hern{\'a}ndez-Hern{\'a}ndez.
\newblock On the relation between risk sensitive control and indifference
  pricing.
\newblock In {\em 2008 American Control Conference}, pages 1013--1016. IEEE,
  2008.

\bibitem{K:1996}
O.~Kallenberg.
\newblock On the existence of universal functional solutions to classical
  {SDE}'s.
\newblock {\em Ann. Probab.}, 24(1):196--205, 1996.

\bibitem{kar-shr}
I.~Karatzas and S.~E. Shreve.
\newblock {\em Brownian motion and stochastic calculus}, volume 113 of {\em
  Graduate Texts in Mathematics}.
\newblock Springer-Verlag, New York, second edition, 1991.

\bibitem{Napp:03}
C.~Napp.
\newblock The {D}alang-{M}orton-{W}illinger theorem under cone constraints.
\newblock {\em J. Math. Econom.}, 39(1-2):111--126, 2003.
\newblock Special issue on equilibrium with asymmetric information.

\bibitem{RY}
D.~Revuz and M.~Yor.
\newblock {\em Continuous martingales and {B}rownian motion}, volume 293 of
  {\em Grundlehren der Mathematischen Wissenschaften [Fundamental Principles of
  Mathematical Sciences]}.
\newblock Springer-Verlag, Berlin, third edition, 1999.

\bibitem{skorohod}
A.~V. Skorohod.
\newblock On a representation of random variables.
\newblock {\em Teor. Verojatnost. i Primenen.}, 21(3):645--648, 1976.

\end{thebibliography}
\end{document}